\documentclass[12pt, a4paper]{amsart}
\usepackage[utf8]{inputenc}
\usepackage{amssymb,amsmath,amsthm,amsfonts,threeparttable}
\usepackage{natbib}
\usepackage{setspace}
\usepackage{enumerate, enumitem}
\usepackage[a4paper,includeheadfoot,margin=1in]{geometry}
%\DeclareGraphicsExtensions{.pdf}

\usepackage{xcolor}

\usepackage{tgbonum}

\def\b{\boldsymbol}
\def\bs{\boldsymbol}
\def\m{\mathcal}
\def\vec{\mathsf{vec}\,}
\def\vech{\mathsf{vech}\,}

\def\sup{\operatorname*{\mathsf{sup}}}

\def\C{\mathbb{C}}

\def\E{\mathbb{E}}
\def\R{\mathbb{R}}

\newcommand{\mn}[1]{{\left\vert\kern-0.25ex\left\vert\kern-0.25ex\left\vert #1
    \right\vert\kern-0.25ex\right\vert\kern-0.25ex\right\vert}}

\newcommand{\y}{{\bf y}}
\newcommand{\Y}{{\bf Y}}
\newcommand{\bu}{{\bf u}}
\newcommand{\bU}{{\bf U}}
\newcommand{\x}{{\bf x}}
\newcommand{\X}{{\bf X}}
\newcommand{\bb}{{\bf b}}

\newtheorem{theorem}{Theorem}
\newtheorem*{theorem*}{Theorem}
\newtheorem*{definition}{Definition}
\newtheorem*{assumption}{Assumption}
\newtheorem{lemma}{Lemma}
\newtheorem{example}{Example}
\newtheorem*{example*}{Example}
\newtheorem{corollary}{Corollary}
\newtheorem{proposition}{Proposition}

%\newtheoremstyle{named}{}{}{\itshape}{}{\bfseries}{.}{.5em}{\thmnote{#3's }#1}

\begin{document}

\title[High-Dimensional VAR with Weakly Dependent Innovations]{Regularized Estimation of High-Dimensional Vector Autoregressions with Weakly Dependent Innovations}

\author{Ricardo P. Masini}
\address{Center for Statistics and Machine Learning,
Princeton University}
\email{rmasini@princeton.edu }

\author{Marcelo C. Medeiros}
\address{Department of Economics,
Pontifical Catholic University of Rio de Janeiro}
\email{mcm@econ.puc-rio.br}

\author{Eduardo F. Mendes}
\address{School of Applied Mathematics (EMap),
Getulio Vargas Foundation, Rio de Janeiro (FGV-RJ)}
\email{eduardo.mendes@fgv.br}

\begin{abstract}
There has been considerable advance in understanding the properties of sparse regularization procedures in high-dimensional models. In time series context, it is mostly restricted to Gaussian autoregressions or mixing sequences. We study oracle properties of LASSO estimation of weakly sparse vector-autoregressive models with heavy tailed, weakly dependent innovations with virtually no assumption on the conditional heteroskedasticity. In contrast to current literature, our innovation process satisfy an $L^1$ mixingale type condition on the centered conditional covariance matrices. This condition covers $L^1$-NED sequences and strong ($\alpha$-) mixing sequences as particular examples.
\newline
\noindent
\newline
\textbf{JEL}: C32, C55, C58.
\newline\noindent
\newline
\textbf{Keywords}: high-dimensional time series, LASSO, VAR, mixing.
\end{abstract}
\thanks{The authors are grateful to Anders B. Kock, Giuseppe Cavaliere and anonymous referees for helpful comments. M. C. Medeiros acknowledges partial support from CNPq/Brazil and CAPES.}
\maketitle

\onehalfspace

\section{Introduction}
Modeling multivariate time series data is an important and vibrant area of research. Applications range from economics and finance, as in \citet{cS1980}, \citet{Bauer2011}, \citet{Chiriac2011}, or \citet{vaR2016}, to air pollution and ecological studies \citep{hG2013,kbElRdP2013,mSsBkE2017}. Among alternatives, the Vector Autoregressive (VAR) model is certainly one of the most successful in modeling temporal evolution of vectors, networks, and matrices. See \citet{hL1991} or \citet{cW2015} for comprehensive textbook introductions.

The advances in data collection and storage have created data sets with large numbers of time series (\emph{Big Data}), where the number of model parameters to be estimated may exceed the number of available data observations. A common approach to dealing with high-dimensional data is to impose additional structure in the form of (approximate) sparsity and estimate the parameters by some shrinkage method. Examples of estimation techniques range from Bayesian estimation with ``spike-and-slab'' priors to sparsity-inducing shrinkage, such as the least absolute and shrinkage estimator (LASSO) and its many extensions. See \citet{sMgR2019} for a nice survey on Bayesian VARs or \citet{abKmcMgfV2019} for a review on penalized regressions applied to time-series models.

\subsection{Our Contributions}

In this paper we study non-asymptotic properties of high-dimensional VAR models and their parameter estimates using equation-wise (row-wise or node-wise) LASSO. We show that, with high probability, estimated and population parameter vectors are close to each other in the Euclidean norm and discuss restrictions on the rate which the number of parameters can increase as the sample size diverges.

The importance of our results relies on the fact that our non-asymptotic guarantees serve as a fundamental ingredient for the derivation of asymptotic properties of penalized estimators in high-dimensional VAR models, as in \citet{rAsSiW2020}. In particular, our results apply with minimal restrictions on the conditional variance model, allowing, for instance, large-dimensional multivariate linear processes in the variance. Moreover, auxiliary results proved in this paper are of independent interested and can, for instance, be used to derive finite bounds for other type of penalization such as group/structured lasso, elastic-net, SCAD or non-convex penalties.

The data are assumed to be generated from a covariance-stationary and weakly sparse VAR model, where the innovations are martingale difference with sub-Weibull tails and conditional covariance matrix satisfying a $L^1$ mixingale assumption. An important feature is that the resulting process $\{\y_t\}$ is not necessarily mixing. Mixing assumptions can be notoriously difficult to show and we avoid it in this paper. Nevertheless, it follows that our conditions cover strong mixing innovations as a particular case.

These conditions contemplate VAR models with conditional heteroskedasticity as in \citet{lBsLjR2006,fBfFrS2011} or stochastic volatility as in \citet{sCyOmA2009}.

\subsection{Literature review}

Some consistency results on model estimation and selection of high-dimensional VAR processes were obtained by \citet{sSpB2011}, though under much stronger assumptions, such as Gaussianity. \citet{pLmW2012}, \citet{sBgM2015} {and \citet{aKlC2015}} developed powerful concentration inequalities that enabled them to establish consistency under weaker conditions and prove that these conditions hold with high probability. In particular, \citet{sBgM2015} established consistency of $\ell_1$-penalized least squares and maximum likelihood estimators of the coefficients of high-dimensional Gaussian VAR processes and related the estimation and prediction error to the complex dependence structure of VAR processes. Other estimation approaches, including Bayesian approaches, are discussed by \citet{raDpZtZ2016}. \citet{kMpPlS2019} proposed a factor-augmented large dimensional VAR and studied finite sample properties and provide estimation results. However, they assume independent and identically distributed errors. More recently, \citet{kWaTzL2017} derived finite-sample guarantees for the LASSO in a misspecified VAR model. Authors assume the series is either $\beta$-mixing process with sub-Weibull marginal distributions or $\alpha$-mixing Gaussian processes. {Finally, \citet{rAsSiW2020} develop theoretical results for point estimation and inference in near epoch dependent time series using desparsified lasso under high level conditions on the generating process.}

\subsection{Organization of the Paper}

The paper is organized as follows. In Section \ref{S:Model} we define the model and the main assumptions in the paper. In Section \ref{S:examples} we discuss examples of applications of our results. The theoretical results are presented in Section \ref{S:bounds}, while in Section \ref{S:Discussion} we provide a discussion of our findings and conclude the paper. All technical proofs are relegated to the Appendix.

\subsection{Notation}
Throughout the paper we use the following notation. For a vector $\bb = (b_1, ..., b_k)'\in\R^k$ and $p\in[1,\infty]$, $ |\bb|_p$ denotes its $\ell_p$ norm, i.e. $|\bb|_p = (\sum_{i=1}^k|b_i|^p)^{1/p}$ for $p\in[1,\infty)$ and $|\bb|_\infty = \max_{1\leq i\leq k} |b_i|$. We also define $|\bb|_0 = \sum_{i=1}^kI(b_i\ne 0)$. For a random variable $X$, $\|X\|_p = (\E|X|^p)^{1/p}$ for $p\in[1,\infty)$  and $\|X\|_\infty = \inf\{a\in\R:\Pr(|X|\ge a)=0\}$.
For a $m\times n$ matrix $\bs{A}$ with elements $a_{ij}$, we denote $\mn{\bs{A}}_1 = \max_{1\le j\le n}\sum_{i=1}^m|a_{ij}|$, $\mn{\bs{A}}_\infty = \max_{1\le i\le m}\sum_{j=1}^n|a_{ij}|$, the induced $\ell_\infty$ and $\ell_1$ norms respectively, and the maximum elementwise norm $\mn{\bs{A}}_{\max} = \max_{i,j} |a_{ij}|$. Also $\Lambda_{\min}(\b A)$ and $\Lambda_{\max}(\b A)$ denotes the minimum and maximum eigenvalues of the square matrix $\b A$, respectively.

\section{Model setup and Assumptions}\label{S:Model}
Let $\{\y_t = (y_{t,1},...,y_{t,n})':t\in\mathbb{Z}\}$ be a vector stochastic process defined in some fixed probability space taking values on $\R^n$ given by
\begin{equation}
\y_t = \bs{A}_1\y_{t-1} + \cdots +\bs{A}_p\y_{t-p} +\bu_t,
\label{eq:dgp}
\end{equation}
where $\bu_t = (u_{t,1},...,u_{t,n})'$ is a zero-mean vector of innovations and $\bs{A}_1,\ldots,\bs{A}_p$, are $n\times n$ parameter matrices. The dimension $n:= n_T$ and order $p:= p_T$ of the process are allowed to increase with the number of observations $T$. Write the vector-autoregressive (VAR) process \eqref{eq:dgp} using its first-order representation:
\begin{equation}
    \tilde{\y}_t = \b{F}_T\tilde{\y}_{t-1} + \tilde{\bu}_t,
    \label{eq:var1}
\end{equation}
where $\tilde{\y}_t = (\y_t',...,\y_{t-p+1}')'$, $\tilde{\bu}_t = (\bu_t',\b{0}',...,\b{0}')'$, and
\[
    \b{F}_T = \left[\begin{array}{ccccc}
        \b{A}_1 & \b{A}_2 & \cdots & \b{A}_{p-1} & \b{A}_p\\
            \b{I}_n & \b{0}_n & \cdots & \b{0}_n     & \b{0}_n\\
        \b{0}_n &     \b{I}_n &        & \b{0}_n     & \b{0}_n\\
         \vdots &         & \ddots & \vdots      & \vdots \\
        \b{0}_n & \b{0}_n &        &     \b{I}_n     & \b{0}  \\
    \end{array}\right].
\]

Consider now the following assumptions.

\begin{assumption}[A1]\label{a:a1}
All roots of the reverse characteristic polynomial $\b{\mathcal{A}}(z) = \b{I}_n-\sum_{i=1}^p\b{A}_jz^j$ lie outside the unit disk for  each $p,n\in\mathbb{N}$,  and there exist $\bar{c}_\Phi >0 $, $c_\phi>0 $ and $0<\gamma_1\le 1$ such that for all $m\in\mathbb{N}$
\begin{equation}
     \sum_{k=m}^\infty\left|\b\phi_{k,i}\right|_1 \le  \bar{c}_\Phi e^{-c_\phi m^{\gamma_1}},
    \label{eq:a1tailsum}
\end{equation}
uniformly in $1\leq i\leq n$,  where $\b\Phi_k := \b J'\b{\b F}_T^k\b J = (\b\phi_{k,1},...,\b\phi_{k,n})'$,  $\b{F}_T$ denote the companion matrix and $\b J = (\b{I}_n,\b{0}_n,...,\b{0}_n)'$.
\end{assumption}

\begin{assumption}[A2]\label{a:a2}
The sequence $\{(\bu_t,\m{F}_t)\}_t$ is a covariance stationary (for each $T\in\mathbb{N}$) martingale difference process where the filtration $\{\m{F}_t\}_t$ includes the natural filtration of $\{\bu_t\}$ . The smallest and largest eigenvalues of $\b\Sigma:=\E(\bu_1 \bu_1')$ are bounded away from $0$ and $\infty$ respectively,  uniformly in $T\in\mathbb{N}$.  Furthermore, for all $\bb_1,\bb_2 \in \{\b v\in\R^n:|\b v|_1\le 1\}$ and $ m\in\mathbb{N}$,
\[
   \E\left|\E[\bb_1'(\bu_t\bu_t' - \b\Sigma)\bb_2|\mathcal{F}_{t-m}]\right| \le a_1e^{-a_2m^{\gamma_2}},
\]
for some $a_1,a_2>0$ and $0<\gamma_2\le 1$,  uniformly in $1\leq t \leq T$ and $T\in\mathbb{N}$.
\end{assumption}

\begin{assumption}[A3]\label{a:a3}
For all $\bb\in\{ \b v \in \R^n:|\b v|_1\le 1\}$ and all $0<x<\infty$,  $\Pr(|\bb'\bu_t|>x)\le 2 e^{-|x/c_\alpha|^\alpha}$ for some $\alpha>0$, $0<c_\alpha<\infty$, uniformly in $1\leq t \leq T$ and $T\in\mathbb{N}$.
\end{assumption}

Assumption (A1) requires that the VAR process is stable and admits an infinite-order vector moving average, VMA($\infty$), representation for all $n$ and $p$ as
\begin{equation}
    \y_t = \sum_{i=0}^\infty {\b{J}}'\b{F}_T^i{\b{J}}\bu_{t-i} = \sum_{i=0}^\infty \b\Phi_i \bu_{t-i}.
    \label{eq:vmainfty}
\end{equation}
Furthermore, the coefficients of the MA($\infty$) representations of each $\{y_{i,t}\}$, $i=1,..,n$, are absolutely summable with exponentially decaying rate. This condition is satisfied in standard VAR($p$) models, where $n$ and $p$ are fixed. In models that $n$ is large, Lemma \ref{l:bndsum1} in Appendix \ref{a:moments} shows that condition \eqref{eq:a1tailsum} is satisfied if $\sum_{k=1}^p\mn{\b{A}_k}_\infty<1$ and further regularity conditions on the size of the coefficients. Finally, notice that under (A1) it is also true that $\max_{k,i}|\b\phi_{k,i}|_\infty \le \bar{c}_\Phi$, which means that the coefficients $\{\b\Phi_k\}$ are uniformly upper bounded under the maximum entry-wise norm.

Assumption (A2) requires the error process to be a martingale difference process and  satisfy a very weak dependence condition on its conditional variance.  The former restricts the model to be correctly specified in the mean. Nevertheless, this assumption is standard in the literature and we are able to derive results covering a broad range of data generating processes and conditional dependence measures. The latter is the $L^1$ projective dependence measure appearing in \citet[section 2.2.4]{weakdependence}.
Note that (1) strong mixing (or $\alpha$-mixing) sequences with exponential decay of the mixing coefficient satisfy this condition \cite[Theorem 14.2]{jD1994}; and (2) uniform mixing sequences ($\phi$-mixing) and $\beta$-mixing sequences are also strong mixing, but the converse is not true \citep[Equations (1.11) - (1.18)]{rB2005}. If we denote the centered outer product series $\b{v}_t = \vech(\bu_t\bu_t'-\b\Sigma)$, this assumptions requires that $\{\b{v}_t\}$ is $L^1$ mixingale. It means that stochastic process with $L^r$ bounded, $L^1$ near-epoch dependent, centered outer product series $\b{v}_t$ are also contemplated in this setting \cite{dA1988}. Finally, Assumptions (A1) and (A2) combined ensure that $\{\y_t\}$ is second order stationary for each $n$ and $p$ \citep[Ch. 2]{hL2006}.

Condition (A3) imposes restrictions on the tail behavior of the innovation process $\{\bu_t\}$ that are shared by $\{\y_t\}$. More precisely, we impose moment conditions on all linear combinations $\bb'\bu_t$. Lemma \ref{l:moments}, in the appendix, shows that each $\{y_{i,t}\}$ ($i=1,...,n$) also share the same tail properties of $\{\bu_t\}$. This condition is essential for defining the rate in which $n$ and $p$ increase with $T$. We focus on the case the tail decays at rate $O(e^{-cx^\alpha})$ for some $\alpha>0$, that is, $\{\bb'\bu_t\}$ is sub-Weibull with parameter $\alpha$ studied in \citet[Section 4.1]{kWaTzL2017}. Note that when $\alpha\ge 1$ and $\alpha\ge 2$ we have the sub-exponential and sub-Gaussian tails respectively. However, when $\alpha\in(0,1)$ the moment generating function does not exist at any point and and these variables are usually called \emph{heavy tailed}.

It is convenient to write the model in stacked form. Let $\x_t = (\y_{t-1}',\ldots,\y_{t-p}')'$ be the $np\times 1$ vector of regressors and $\X = (\x_1,...,\x_T)'$ the $T\times np$ matrix of covariates. Let $\Y_i = (y_{i,1},...,y_{i,T})'$ be the $T\times 1$ vector of observations for the $i^{th}$ element of $\y_t$, and $\bU_i = (u_{i,1},...,u_{i,T})'$ the corresponding vector of innovations. Denote $\b\beta_i$ the $np\times 1$ vector of coefficients corresponding to equation $i$. Then, model \eqref{eq:dgp} is equivalent to
\begin{equation}
\Y_i = \X\b\beta_i + \bU_i, \quad i=1,\ldots,n.
\label{eq:dgp-stack}
\end{equation}

We now make additional assumptions concerning model \eqref{eq:dgp-stack}.

\begin{assumption}[A4]
The true parameter vectors $\b\beta_i$, $i=1, \ldots, n$, satisfy $\sum_{j=1}^{np}|\beta_{i,j}|^q \le R_q$ for some $0\le q<1$ and $0<R_q<\infty$ where $R_q:=R_{q,T}$ is allowed to depend on the sample size $T$.
\end{assumption}

\begin{assumption}[A5] For each $T\in\mathbb{N}$,  the smallest eigenvalue of $\b\Gamma := T^{-1}\E(\X'\X)$ is greater than a positive constant $\sigma_\Gamma^2$ that might depend on $T$.
\end{assumption}

Assumption (A4) imposes \emph{weak sparsity} of the coefficients, in a sense that most of them are small. This condition is slightly stronger than we need in a sense that we may have distinct $q_i$ and $R_{q,i}$ for each equation. In the case $q=0$ we have sparsity in the standard sense, meaning that $R_0 = s$, the number of non-zero coefficients. In practice, we estimate a sparse model that truncates all coefficients close to zero. This assumption is standard for \emph{weak sparsity}, see \cite{sNpRmWbY2012}[section 4.3] and \cite{yHrT2019}[Assumption 1] for an application in time series setting.

Assumption (A5) is often used in the sparse estimation literature \cite[e.g.][]{aKlC2015,mcMeM2015,yHrT2019}. \citet{sBgM2015} (Proposition 2.3) derived bounds for $\Lambda_{\min}(\b\Gamma)$ and $\Lambda_{\max}(\b\Gamma)$ using properties of the block Toeplitz matrix $\b\Gamma$ and its generating function, the cross-spectral density of the generating VAR$(p)$ process:
\begin{equation}
    \frac{\Lambda_{\min}(\b\Sigma)}{\max_{|z|=1}\Lambda_{\max}(\b{\mathcal{A}}^*(z)\b{\mathcal{A}}(z))}\le \Lambda_{\min}(\b\Gamma)\le\Lambda_{\max}(\b\Gamma)\le\frac{\Lambda_{\max}(\b\Sigma)}{\max_{|z|=1}\Lambda_{\min}(\b{\mathcal{A}}^*(z)\b{\mathcal{A}}(z))},
    \label{eq:bnd_min_eig}
\end{equation}
where $\b{\mathcal{A}}^*$ is the conjugate transpose of $\b{\mathcal{A}}$, the reverse characteristic polynomial, defined in Assumption (A1). \citet{sBgM2015}[Proposition 2.2] shows that under (A1),
\[
    {\max_{|z|=1}\Lambda_{\max}(\b{\mathcal{A}}^*(z)\b{\mathcal{A}}(z))} <\left[1+\frac{\sum_{k=1}^p(\mn{\b{A}_k}_1+\mn{\b{A}_k}_\infty)}{2}\right]^2.
\]
Hence, (A5) is satisfied if, for instance, $\Lambda_{\min}(\b\Sigma)>0$, $\sum_{k=1}^p\mn{\b{A}_k}_1<\infty$ and $\sum_{k=1}^p\mn{\b{A}_k}_\infty<\infty$.

\section{Examples}\label{S:examples}
In this section we illustrate processes satisfying Assumptions (A2) and (A3). In the first two examples we discuss sufficient conditions involving mixing and near epoch dependent sequences, traditionally found in the literature. In the final two examples, we discuss variance process admitting an AR($\infty$) representation.

\begin{example}[Strong mixing sequences]
    Let $\{\bu_t\}$ denote a martingale difference, strong mixing sequence with coefficients $\alpha_m< b_1\exp(-b_2m^{\gamma_2})$ and common covariance matrix $\b\Sigma$ with eigenvalues bounded away from zero and infinity, uniformly in $n$. It follows that $r_t = \bb_1'\bu_t\bu_t'\bb_2$ is also strong mixing of same size and, from \cite[Theorem 14.2]{jD1994}, $\E[r_t-\E(r_t)|\mathcal{F}_{t-m}]\le a_1 \exp(-a_2m^{\gamma_2})$, for constants $a_1$ and $a_2$.
\end{example}

\begin{example}[$L^1$ near-epoch dependent process]
    Let $\{\bu_t\}$ denote a weakly stationary, martingale difference sequence. Suppose $\bb'\b{v}_t = \bb'\vech(\bu_t\bu_t'-\b\Sigma)$ is a centered, $L^1$-NED sequence on $\mathcal{F}_t = \sigma\langle \epsilon_t, \epsilon_{t-1}, ...\rangle$, where $\{\epsilon_t\}$ is $\alpha$-mixing with coefficients $\alpha_m \le c_1 \exp(c_2 m^{\gamma_1})$, for all $\bb\in\{\bb\in\R^{n(n+1)/2}:|\bb|_1\le 1\}$. It means that there are finite constants $\{d_t\}$ and $\{\psi_m\}$ such that
    \[
    \E\left| \bb'(\b{v}_t-\E[\b{v}_t|\mathcal{F}_{t-m:t}])\right| \le d_t\psi_m,
    \]
    where $\mathcal{F}_{t-m:t} = \sigma\langle \epsilon_t,...,\epsilon_{t-m}\rangle$ and $\psi_m \le \exp(c_3 m^{\gamma_2})$. Under Assumption (A3), it follows from \citet[Lemma 5]{kWaTzL2017} and Hölder inequality that for any $r<\infty$
    \[
        \|\bb'\b{v}_t\|_r\le |\bb|_1^r\max_{1\le i\le j\le n}\|u_{it}u_{jt}\|_r\le\max_{1\le i\le n}\|u_{it}\|_{2r}\le c_4 r^{1/\alpha}.
    \]
    Finally, it follows from \citet[Example 6]{dA1988} that Assumption (A2) holds with $a_1 \ge  (2\max_t d_t + c_4 r^{1/\alpha})(e^{c_3/2^{\gamma_2}}+6c_1e^{c_2(r-1)/r2^{\gamma_2}})$ and $a_2 \le (c_3\wedge c_2 (r-1)/r)/2^{\gamma_2}$.
\end{example}

\begin{example}[Linear process in the variance]
    Let $\{\b{v}_t,\b{\epsilon}_t\}$ denote a sequence of centered independently and identically distributed, sub-Weibull random variables with parameter (at least) $2\alpha$, taking values in $\R^{2n}$ with identity covariance matrix.
    Let $\bu_t = \b{H}_t^{1/2}\b{v}_t$ where $\b{H}_t^{1/2}$ is the lower diagonal Cholesky decomposition of $\b{H}_t$ and
    \[
    \b{h}_t = \vech(\b{H}_t) = \b c + \sum_{j=1}^\infty \b \Psi_j\b\eta_{t-j}.
    \]
    Here, $\vech(\b{M})$ stacks the lower diagonal elements of matrix $\b{M}$, $\b c$ is a vector of constants and $\b\eta_{t} = \vech(\b{\epsilon}_t\b{\epsilon}_t')$. For all $\tilde{\bb}\in\{\bb\in \R^{n(n+1)/2}:|\bb|_1 \le 1\}$, $\{\b \Psi_j\}$ satisfy $\sum_{j=m}^\infty|\tilde{\bb}'\b \Psi_j|_1\lesssim e^{-a_2 m^{\gamma_2}}$.

    We first show $\{\bu_t\}$ is weakly stationary martingale difference with respect to $\mathcal{F}_{t-1} = \sigma\langle(\b{v}_{t-j},\b{\epsilon}_{t-j}): j=1,2,...\rangle$. The process $\{\bu_t\}$ is $\mathcal{F}_{t}$ measurable and satisfy $\E[\bu_t|\mathcal{F}_{t-1}] = \b{H}_t^{1/2}\E[\b{v}_t|\mathcal{F}_{t-1}] = \b 0$. Its covariance matrix is
    \begin{align*}
    \E[\bu_t\bu_t'] = \E[\b{H}_t^{1/2}\E(\b{v}_t\b{v}_t'|\mathcal{F}_{t-1}) (\b{H}_t^{1/2})']=\E[\b{H}_t].
    \end{align*}
    Now, $\E[\b{h}_t] = \b c +\sum_{j=1}^\infty\b\Psi_j\E\b\eta_{t-j} = \b c +\sum_{j=1}^\infty\b\Psi_j\vech(\b{I}_n)=\b\Sigma$, where $\E(\b\eta_t) = \vech(\E(\b{\epsilon}_t\b{\epsilon}_t')) = \vech(\b{I}_n)$ for all $t$.

    For constant vectors $\bb_1,\bb_2\in\{\bb\in \R_n:|\bb|\le 1\}$,
    \begin{align*}
        \E[\bb_1'(\bu_t\bu_t'-\b\Sigma)\bb_2|\mathcal{F}_{t-1}]
        &= \bb_1'(\b{H}_t-\E \b{H}_t)\bb_2\\
        &= \tilde{\bb}'(\b{h}_t-\E \b{h}_t) \\
        &= \sum_{j=1}^\infty\tilde{\bb}'\b\Psi_j(\b\eta_{t-j}-\E \b\eta_{t-j}),
    \end{align*}
    where $\tilde{\bb}\in\{\bb\in \R^{n(n+1)/2}:|\bb|_1 \le 1\}$. It follows that
    \begin{align*}
        \E\left|\E[\bb_1'(\bu_t\bu_t'-\b\Sigma)\bb_2|\mathcal{F}_{t-m}]\right|
        &= \E\left|\sum_{j=1}^\infty\tilde{\bb}'\b\Psi_j\E(\b\eta_{t-j}-\E \b\eta_{t-j}|\mathcal{F}_{t-m})\right|\\
        &=\left\|\sum_{j=m}^\infty\tilde{\bb}'\b\Psi_j(\b\eta_{t-j}-\E \b\eta_{t-j})\right\|_1\\
        %&\le \sum_{j=m}^\infty|\tilde{\bb}'\b\Psi_j|_1 2\max_{|\delta|\le 1}\left\| \delta_t'\b\eta_{t-j}\right\|_1\\
        &\le 2 \left(\sum_{j=m}^\infty|\tilde{\bb}'\b\Psi_j|_1\right)\max_{|\bb|_1\le 1}\|\bb'\b{\epsilon}_t\|_2^2,
    \end{align*}
    where in the last line we use the same arguments of Lemma \ref{l:moments} in the appendix, followed by the triangle inequality.
    %$\|\delta'\b\eta_t\| \le \max_{|\bb_1|\le 1, |\bb_2|_1\le 1}\|\bb_1'\b{\epsilon}_t\b{\epsilon}_t'\bb_2\| \le  \max_{|\bb|\le 1}\|\bb'\b{\epsilon}_t\|_2^2$.
    Then, Assumption (A2) is satisfied under the condition that $\sum_{j=m}^\infty|\tilde{\bb}'\b\Psi_j|_1\lesssim e^{-a_2 m^{\gamma_2}}$ and $\|\bb'\b{\epsilon}_t\|_2\le c_2<\infty$.

    It follows from \cite[Lemma 5]{kWaTzL2017} that $\{\b u_t\}$ is sub-Weibull with parameter $\alpha$ if $\sup_{d\ge_1}d^{-1/\alpha}\|\bb'\bu_t\|_d\leq c_\alpha<\infty$. For any $d\ge 1$,
    \begin{align*}
        d^{-1/\alpha}\|\bb'\bu_t\|_d &= d^{-1/\alpha}\|\bb'\bu_t\bu_t'\bb\|_{d/2}^{1/2}\\
        &= d^{-1/\alpha}\|\bb'\b{H}_t^{1/2}\b{v}_t\b{v}_t'(\b{H}_t^{1/2})\bb\|_{d/2}^{1/2}\\
        &= d^{-1/\alpha}\left\|\bb'\b{H}_t\bb\times \frac{\bb'\b{H}_t^{1/2}\b{v}_t\b{v}_t'(\b{H}_t^{1/2})\bb}{\bb'\b{H}_t\bb}\right\|_{d/2}^{1/2}\\
        &\le d^{-1/\alpha}\left\{\E\left(\left|\bb'\b{H}_t\bb\right|^{d/2}\E\left[\left|\frac{\bb'\b{H}_t^{1/2}\b{v}_t\b{v}_t'(\b{H}_t^{1/2})\bb}{\bb'\b{H}_t\bb}\right|^{d/2}\Big\vert\mathcal{F}_{t-1}\right]\right)\right\}^{1/d}\\
        &\le d^{-1/\alpha}\left\{\E\left(\left|\bb'\b{H}_t\bb\right|^{d/2}\sup_{\b\delta'\b\delta = 1}\E[(\b\delta'\b{v}_t\b{v}_t'
        \b\delta)^{d/2}|\mathcal{F}_{t-1}]\right)\right\}^{1/d}\\
        &= d^{-1/2\alpha}\left\|\bb'\b{H}_t\bb\right\|_{d/2}^{1/2}\sup_{\b\delta'\b\delta = 1}d^{-1/\alpha}\|\b\delta'\b{v}_t\|_d,\\
        &\le \left(d^{-1/\alpha}\left\|\bb'\b{H}_t\bb\right\|_{d/2}\right)^{1/2} c_{2\alpha}
    \end{align*}
    where the two last lines follow because $\{\b{v}_t\}$ is independent and sub-Weibull process with parameter $2\alpha$. There is a $\tilde{\bb}\in\{\bb\in\R^{n(n-1)/2}:|\bb|_1\le 1\}$ such that
    \begin{align*}
        d^{-1/\alpha}\left\|\bb'\b{H}_t\bb\right\|_{d/2} &= d^{-1/\alpha}\|\tilde{\bb}'\b{h}_t\|_{d/2}\\
        &= d^{-1/\alpha}\left\|\tilde{\bb}'\b c+\sum_{j=1}^{\infty}\tilde{\bb}'\b\Psi_j\b\eta_{t-j}\right\|_{d/2}\\
        &\le d^{-1/\alpha}\left|\tilde{\bb}'\b c\right|_{d/2}+d^{-1/\alpha}\left\|\sum_{j=1}^{\infty}\tilde{\bb}'\b\Psi_j\b\eta_{t-j}\right\|_{d/2}\\
        &\le d^{-1/\alpha}\left|\tilde{\bb}'\b c\right\|t_{d/2}+ 2 \left(\sum_{j=1}^\infty|\tilde{\bb}'\b\Psi_j|_1\right)\max_{|\bb|_1\le 1}\left(d^{-1/2\alpha}\|\bb'\b{\epsilon}_t\|_{d}\right)^2\\
        &\le \left|\tilde{\bb}'\b c\right|_{d/2} + c_{2\alpha}^2.
    \end{align*}
    Combining these bounds, process $\{\bu_t\}$ is sub-Weibull with parameter $\alpha$, satisfying Condition (A3).
\end{example}

\begin{example}[{Stochastic covariance}]\label{ex:ar1}
Let
\[
\y_t = \sum_{i=1}^p\b{A}_i\y_{t-i} + \b{H}_t^{1/2}\b{v}_t, \quad \b{H}_{t+1} = \b{C}_0 + \b{\Psi} \b{H}_t \b{\Psi}' + \b{\epsilon}_t\b{\epsilon}_t',
\]
where $\b{v}_t\overset{iid}{\sim}\mathsf{N}(\b 0,\b{I}_n)$, $\b{\epsilon}_t \overset{iid}\sim (\b 0,\b{I}_n)$ is sub-Weibull with parameter $2\alpha$, and processes $\{\b{v}_t\}$ and $\{\b{\epsilon}_t\}$ are independent. Let $\mathcal{F}_t = \sigma\langle(\b{v}_{t-j},\b{\epsilon}_{t-j}):j=0,1,2,\ldots\rangle$.

Process $\{\b{H}_t\}$ is a matrix process characterizing the stochastic covariance of $\b{u}_t =  \b{H}_t^{1/2}\b{v}_t$ evolves according to a matrix autoregressive process. The \emph{intercept} $\b{C}_0$ is symmetric and positive definite matrix and the eigenvalues of $\b{\Psi}$ are inside the unity circle. We also assume $\mn{\b\Psi}_1\mn{\b\Psi}_\infty < 1$, which implies that the largest singular value of $\b\Psi$ is smaller than one.

Under these conditions, the process $\{\b{H}_t\}$ is ensured to be positive definite and stationary. To see the latter, vectorize the process to obtain $\bs{h}_t = \vec(\b{H}_t)$, $\b{c}_0 = \vec(\b{C}_0)$, $\b\eta_t = \vec(\b\epsilon_t\b\epsilon_t')$, $\bar{\b\Psi} = \b\Psi \otimes \b\Psi'$ and
\[
(\b{I}-\bar{\b{\Psi}} L)\b{h}_{t+1} = \b{c}_0 + \b\eta_t  \Leftrightarrow \b{h}_{t+1} =  \sum_{j=0}^\infty\bar{\b\Psi}^j\b{c}_0 + \sum_{j=0}^\infty\bar{\b\Psi}^j\b\eta_{t-j}.
\]
The process is stationary because eigenvalues of $\bar{\b\Psi}$ are products of eigenvalues of $\b\Psi$, which is also inside the unity circle. We are exactly in the setting of previous example.

We have to show $\sum_{j=m+1}^\infty|\bb'\bar{\b\Psi}^j|_1 \lesssim e^{-a_2m^{\gamma_2}}$ for all $\bb\in R^{n^2}:|\bb|_1 = 1$. The term on the left hand side is bounded by $\sum_{j>m}\mn{\bar{\b\Psi}^j}_1$ and each $\mn{\bar{\b\Psi}^j}_1\le\mn{\b\Psi^j}_1\mn{\b\Psi^j}_\infty\le (\mn{\b\Psi}_1\mn{\b\Psi}_\infty)^{j}$. Under the assumption that $\mn{\b\Psi}_1\mn{\b\Psi}_\infty = c_{\max} < 1$, $\sum_{j=m+1}^\infty|\bb'\bar{\b\Psi}^j|_1 \le \frac{c_{\max}}{1-c_{\max}}e^{-m\log(1/c_{\max})}$ meaning that condition is satisfied with $\gamma_2 = 1$ and $a_2 = \log(1/c_{\max})$.

Finally, the unconditional covariance of $\bu_t$ is
\[
\b{\Sigma} := \E[\b H_t] = \sum_{j=0}^\infty \b\Psi^j(\b C_0+\b I_n){\b\Psi'}^j.
\]
The smallest eigenvalue $\min_{\b\delta'\b\delta = 1}\b\delta'\b\Sigma\b\delta \ge \Lambda_{\min}(\b C_0) + 1$ and largest eigenvalue of $\max_{\b\delta'\b\delta = 1}\b\delta'\b\Sigma\b\delta \le (\rho(\b C_0)+1)(1-\rho(\b\Psi)^2)^{-1}$, where $\Lambda_{\min}(\b A)$ is the smallest eigenvalue of $\b A$ and $\rho(\b A)$ is the spectral radius of $\b A$.
\end{example}

\section{LASSO estimation bounds}\label{S:bounds}
Let $\mathcal{L}_T(\b\beta_i) = \frac{1}{T}|\Y_i-\X\b\beta_i|_2^2$ denote the empirical squared risk, for each $i=1,...,n$. We estimate $\b\beta_i$, $i=1,\ldots,n$, equation-wise using the LASSO procedure
\begin{equation}
 \widehat{\b\beta}_i \in \arg\min_{\b\beta_i\in\R^{np}}\left\{\mathcal{L}_T(\b\beta_i) + \lambda_i|\b\beta_i|_{1}\right\}, \quad i=1,...,n,
    \label{eq:lasso}
\end{equation}
where $\lambda_i$ are positive regularization parameters. For ease of exposition we assume $\lambda_1 = \cdots = \lambda_n = \lambda$. It is well known that $\b\beta^*_i = \arg\min_{\b\beta_i}\E\left\{\mathcal{L}_T(\b\beta_i)\right\}$ are the population parameters in \eqref{eq:dgp-stack}, under stated conditions.

We follow the steps in \cite{sNpRmWbY2012} to derive error bounds for the equation-wise LASSO estimator. First define the pair of subspaces $\mathcal{M}(S) = \{\b{u}\in\R^{np}|u_i=0, i\in S^c\}$ and its orthogonal complement $\mathcal{M}^\perp(S)= \{\b{u}\in\R^{np}|u_i=0, i\in S\}$, where $S\subseteq\{1,\ldots,np\}$. Set $\b{u}_\mathcal{M}$ and $\b{u}_{\mathcal{M}^\perp}$ the projection of $\b{u}$ on $\mathcal{M}(S)$ and $\mathcal{M}^\perp(S)$, respectively. Clearly, for any $\b{u}\in\R^{np}$, $|u|_1 = |\b{u}_\mathcal{M}|_1+|\b{u}_{\mathcal{M}^\perp}|_1$. We say $|\cdot|_1$ is decomposable with respect to the pair $(\mathcal{M}(S),\mathcal{M}^\perp(S))$ for any set $S\subset\{1,\ldots,np\}$.

We have to show two conditions to obtain a finite sample estimation error bound for the parameter vectors. The first condition is known as \emph{restricted strong convexity} (RSC) and restricts the geometry of the loss function around the optimum $\b\beta^*$ and is related to the Restricted Eigenvalue \citep{vGpB2009}. The second condition is known as \emph{deviation bound} and restricts the size of the $\sup$-norm of the gradient $\nabla\mathcal{L}_T(\b\beta^*)$. These conditions are shown to be satisfied in a set of large probability defined in Propositions \ref{thm:bnd A} and \ref{thm:bnd B}.

\begin{definition}[Deviation Bound (DB)]
The \emph{deviation bound} condition holds when the regularization parameter $\lambda$ satisfies $\{\lambda\ge 2|\X'\b{U}_i/T|_\infty\}$ for all $i=1,...,n$.
\end{definition}
Note that one may adopt individual $\lambda_i$s for each equation, in which above definition should be modified adequately.

\begin{definition}[Restricted Strong Convexity (RSC)]
Define $\C(\b\beta^*,\mathcal{M},\mathcal{M}^\perp) = \{\b\Delta\in\R^{np}||\b\Delta_{\mathcal{M}^\perp}|_1\le 3|\b\Delta_\mathcal{M}|_1+4|\b\beta^*_{\mathcal{M}^\perp}|_1\}$. The \emph{restricted strong convexity} holds for parameters $\kappa_\mathcal{L}$ and $\tau_\mathcal{L}$ if for any $\b\Delta\in\C$,
\[
\frac{\b\Delta'\X'\X\b\Delta}{T} \ge \kappa_\mathcal{L}|\b\Delta|_2^2 - \tau_\mathcal{L}^2(\b\beta^*).
\]
\label{def:rsc}
\end{definition}

\citet{sNpRmWbY2012}[Section 4] show these conditions are satisfied by many loss functions and penalties. \cite{sBgM2015} show that both DB and RSC are satisfied by Gaussian VAR($p$) models in high dimensions.

If both DB and RSC hold with large probability, \citet{sNpRmWbY2012}[Theorem 1] provides an $\ell_2$ estimation bound for $\widehat{\b\beta_i}$. Our goal is to show that the error bounds are valid for each $\b\Delta_i = \widehat{\b\beta_i} - \b\beta_i^*$, $i=1,\ldots,n$ at the same time.

Lemma \ref{l:solutionset} characterizes the solutions of the optimization program in \eqref{eq:lasso}. We require further notation. Define $\C_i := \C(\b\beta_i^*,\mathcal{M}_{i,\eta},\mathcal{M}_{i,\eta}^\perp)$ for a pair of subsets $\mathcal{M}_{i,\eta} = \mathcal{M}(S_{i,\eta})$ and $\mathcal{M}^\perp_{i,\eta} = \mathcal{M}^\perp(S_{i,\eta})$, where  $S_{i,\eta} = \{j\in\{1,...,pn\}||\beta_{i,j}|>\eta\}$ and $S_{i,\eta}^c = \{j\in\{1,...,pn\}||\beta_{i,j}|\le\eta\}$. These sets represent the \emph{active parameters} under weak sparsity. In Theorem \ref{thm:l2bnd} we set $\eta = \lambda/\sigma_\Gamma^2$ to derive our results.

\begin{lemma}
Suppose $\{\y_t\}$ is generated from \eqref{eq:dgp} and Assumptions (A1), (A2) and (A3) are satisfied. Set
\begin{equation}\label{E:lambda}
    \lambda>\tau^*(\epsilon+\log(Tn^2p))^{2/\alpha}\sqrt{\frac{\epsilon+\log(n^2p)}{T}},
\end{equation}
where $\epsilon>0$ and $\tau^*>0$ depends on $\tau$, $\alpha$ and $\bar{c}_\Phi$. Then,  if $T>\epsilon+\log(n^2p)$, the event $\left\{\forall i=1,...,n:~\widehat{\b\beta}_i - {\b\beta}^* \in \C_i \right\}$ holds with probability at least $1-10e^{-\epsilon}$.
\label{l:solutionset}
\end{lemma}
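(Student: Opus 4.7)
The plan is to condition on a single high-probability event, call it $\m{E}$, on which the deviation bound $\lambda\ge 2|\X'\bU_i/T|_\infty$ holds for every $i=1,\ldots,n$ simultaneously, and then derive the cone membership $\b\Delta_i := \widehat{\b\beta}_i - \b\beta_i^* \in \C_i$ by a purely deterministic optimality argument applied equation by equation. Since the cone statement to be proved holds for all $i$ on the same event, no further union bound is required after the probabilistic step.

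The probabilistic step is the substantive one. I would control $\Pr(\m{E}^c) = \Pr(\max_{1\le i\le n}|\X'\bU_i/T|_\infty > \lambda/2)$ by a union bound over the $n\cdot np = n^2p$ scalar inner products $T^{-1}\sum_{t=1}^T x_{t,j}u_{i,t}$. Each $x_{t,j}$ is a lagged coordinate of $\y$, which (by (A1)'s VMA($\infty$) representation together with (A3) and the auxiliary Lemma \ref{l:moments} in the appendix) inherits sub-Weibull tails of parameter $\alpha$; hence the product $x_{t,j}u_{i,t}$ is sub-Weibull of parameter $\alpha/2$, explaining the exponent $2/\alpha$ in the threshold \eqref{E:lambda}. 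The concentration of these weakly dependent sums under the combined martingale-difference plus $L^1$-mixingale structure of (A2) is precisely what Propositions \ref{thm:bnd A} and \ref{thm:bnd B} deliver; applied with a per-coordinate tail budget of order $e^{-\epsilon}/(n^2p)$ and summed over the $n^2p$ coordinates, one obtains a failure probability bounded by $10e^{-\epsilon}$ (the constant $10$ absorbing the finite numerical factors from the inequalities), provided $T>\epsilon+\log(n^2p)$ to keep the sub-Gaussian regime of the Bernstein bound active. The extra $\log(Tn^2p)$ appearing inside the threshold enters through the truncation level used in the sub-Weibull Bernstein argument behind those propositions.

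On $\m{E}$ the remainder is standard LASSO bookkeeping in the style of \citet{sNpRmWbY2012}. Fix $i$. Optimality of $\widehat{\b\beta}_i$ in \eqref{eq:lasso} together with $\Y_i = \X\b\beta_i^* + \bU_i$ gives the basic inequality
\[
\tfrac{1}{T}|\X\b\Delta_i|_2^2 + \lambda|\widehat{\b\beta}_i|_1 \;\le\; \tfrac{2}{T}\bU_i'\X\b\Delta_i + \lambda|\b\beta_i^*|_1,
\]
and Hölder plus DB control the stochastic term $|\tfrac{2}{T}\bU_i'\X\b\Delta_i|$ by a constant multiple of $\lambda|\b\Delta_i|_1$ (matching the paper's convention for $\lambda$). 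Decomposability of $|\cdot|_1$ with respect to the pair $(\m{M}_{i,\eta},\m{M}_{i,\eta}^\perp)$ yields the reverse triangle inequality
\[
|\widehat{\b\beta}_i|_1 \;\ge\; |\b\beta^*_{i,\m{M}}|_1 - |\b\Delta_{i,\m{M}}|_1 + |\b\Delta_{i,\m{M}^\perp}|_1 - |\b\beta^*_{i,\m{M}^\perp}|_1,
\]
and substituting this into the basic inequality, dropping the nonnegative prediction term $\tfrac{1}{T}|\X\b\Delta_i|_2^2$, rearranges to $|\b\Delta_{i,\m{M}^\perp}|_1 \le 3|\b\Delta_{i,\m{M}}|_1 + 4|\b\beta^*_{i,\m{M}^\perp}|_1$, i.e.\ $\b\Delta_i \in \C_i$. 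Since this implication is deterministic and valid on $\m{E}$ for each $i$, the event in the conclusion of the lemma contains $\m{E}$, and the stated probability lower bound $1-10e^{-\epsilon}$ follows.

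The hard part is the probabilistic step: obtaining sharp exponential concentration of $\X'\bU_i/T$ with no mixing assumption on $\{\y_t\}$ itself, leaning only on the martingale-difference plus $L^1$-mixingale structure of (A2) and the (possibly heavy) sub-Weibull tails permitted by (A3). This is what makes the $(\epsilon+\log(Tn^2p))^{2/\alpha}\sqrt{(\epsilon+\log(n^2p))/T}$ scaling of $\lambda$ nontrivial to justify, and it is isolated into the earlier propositions; the cone derivation itself is then routine.
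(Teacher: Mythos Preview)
Your approach is essentially the paper's: establish the deviation bound event via Proposition~\ref{thm:bnd A} and then apply the deterministic cone argument of \citet[Lemma~1]{sNpRmWbY2012}, which you spell out correctly. One clarification worth making: you over-invoke Proposition~\ref{thm:bnd B} and the $L^1$-mixingale part of (A2) for the probabilistic step, but neither is needed here---since each regressor $x_{t,j}$ is a lagged coordinate of $\y$ and hence $\m{F}_{t-1}$-measurable, while $u_{i,t}$ is a martingale difference, the product $x_{t,j}u_{i,t}$ is already a martingale difference, so the concentration is a pure sub-Weibull martingale tail bound (Corollary~\ref{c:bndmdorlicz}), exactly as in the paper's proof of Proposition~\ref{thm:bnd A}; the mixingale condition is reserved for the empirical-covariance concentration in Proposition~\ref{thm:bnd B} and plays no role in this lemma.
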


Lemma \ref{l:solutionset} shows that under restrictions on $\lambda$ the solutions to the optimization program in \eqref{eq:lasso} lie inside the star-shaped sets $\C_i$ with high probability, as the sample size increases. It restricts the directions in which we should control the variation of our estimators. Next result shows the deviation bound holds with high probability for appropriate choice of $\lambda$. To formalize the idea, let
 \begin{equation}
     \label{eq:db}
     \mathcal{D}_i(\lambda)=\left\{\lambda \ge 2\left|\frac{1}{T}\b X' \bU_i\right|_{\infty}\right\},\quad i=1,...,n,
 \end{equation}
 denote the event ``\emph{DB holds for equation $i$ with regularization parameter $\lambda$}.''

\begin{proposition}[Deviation Bound]
    Suppose that $\{\y_t\}$ is generated from \eqref{eq:dgp},  Assumptions (A1), (A2) and (A3) are satisfied and $T>\epsilon+\log(n^2p)$ for some $\epsilon>0$.  Set the penalty parameters $\lambda$ as in \eqref{E:lambda}. Then,
    \[\Pr\left(\bigcup_{i=1}^n\mathcal{D}_i^c(\lambda)\right) \le \pi_1(\epsilon):=10e^{-\epsilon}.
    \]
    \label{thm:bnd A}
\end{proposition}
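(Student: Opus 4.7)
Since $|\X'\bU_i/T|_\infty=\max_{1\le j\le np}|T^{-1}\sum_{t=1}^T x_{t,j}u_{t,i}|$, a union bound reduces the claim to proving that for every pair $(i,j)\in\{1,\ldots,n\}\times\{1,\ldots,np\}$ the scalar event $\{|T^{-1}\sum_t x_{t,j}u_{t,i}|>\lambda/2\}$ has probability of order $e^{-\epsilon-\log(n^2p)}$. Summing over the $n^2p$ pairs then contributes an extra $e^{-\epsilon}$, and numerical slack from the truncation step described below absorbs the leading constant $10$.

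For a fixed pair $(i,j)$ set $Z_{t,i,j}:=x_{t,j}u_{t,i}$. Two structural observations are central. First, by (A1) the VMA$(\infty)$ representation in \eqref{eq:vmainfty} writes $x_{t,j}$ as a deterministic linear function of $\{\bu_s:s\le t-1\}$, so $x_{t,j}\in\mathcal F_{t-1}$, and together with $\E[u_{t,i}\mid\mathcal F_{t-1}]=0$ from (A2) this makes $\{Z_{t,i,j},\mathcal F_t\}$ a martingale difference sequence. Second, Assumption (A3) (applied with $\bb$ equal to the $i$-th canonical vector) and Lemma \ref{l:moments} combined with the exponential summability in (A1) show that both $u_{t,i}$ and $x_{t,j}=y_{t-k,l}$ are sub-Weibull with parameter $\alpha$, uniformly in the indices; hence $Z_{t,i,j}$ is sub-Weibull with parameter $\alpha/2$ and an Orlicz norm bounded by a constant $K$ depending only on $c_\alpha$ and $\bar c_\Phi$.

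The plan is then a standard truncate-then-Freedman scheme. Fix $M:=\tau(\epsilon+\log(Tn^2p))^{2/\alpha}$ with $\tau$ large, and split $Z_{t,i,j}=Z_{t,i,j}I(|Z_{t,i,j}|\le M)+Z_{t,i,j}I(|Z_{t,i,j}|>M)$. The sub-Weibull$(\alpha/2)$ tail yields $\Pr(|Z_{t,i,j}|>M)\le 2\exp(-(M/K)^{\alpha/2})$, so a union bound over the $Tn^2p$ triples makes the large part vanish identically except on a set of probability at most a small multiple of $e^{-\epsilon}$. On the complementary event, centering the truncated piece by its conditional mean gives a bounded martingale difference sequence with increments bounded by $2M$; Freedman's inequality then returns a tail of the form $2\exp\bigl(-cTx^2/(V+Mx)\bigr)$ at any level $x$, where $V$ is a high-probability upper bound for the normalized predictable quadratic variation $T^{-1}\sum_t x_{t,j}^2\E[u_{t,i}^2\mid\mathcal F_{t-1}]$.

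I expect the main obstacle to be the control of $V$: Assumption (A2) supplies only $L^1$ mixingale bounds on the centred conditional covariances, so one cannot simply replace $\E[u_{t,i}^2\mid\mathcal F_{t-1}]$ by $\Sigma_{ii}$. The plan is to write $\E[u_{t,i}^2\mid\mathcal F_{t-1}]=\Sigma_{ii}+\delta_t$ with $\E|\delta_t|$ controlled by (A2) applied with $\bb_1=\bb_2=\b e_i$, and to couple this with control of $T^{-1}\sum_t x_{t,j}^2$ obtained from stationarity of $\x_t$ and (A5), producing $V=O(1)$ with probability at least $1-e^{-\epsilon}$. Substituting the prescribed $\lambda$ in \eqref{E:lambda} then places $x=\lambda/2$ in the variance-dominated regime of the Bernstein bound (the $Mx$ term is negligible compared with $V$), and Freedman delivers per-pair probability of order $\exp(-\epsilon-\log(n^2p))$; summing the truncation, variance-control, centering-bias and Bernstein contributions yields the claimed $10e^{-\epsilon}$.
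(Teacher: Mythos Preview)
Your overall architecture --- union bound over the $n^2p$ pairs, recognizing $Z_{t,i,j}=x_{t,j}u_{t,i}$ as a martingale difference via (A1)--(A2), showing it is sub-Weibull$(\alpha/2)$ via Lemma~\ref{l:moments}, then truncate-plus-Bernstein --- is exactly the paper's strategy, which is packaged as Corollary~\ref{c:bndmdorlicz}.

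The gap is in what you call the ``main obstacle'': controlling the predictable quadratic variation $V$ to be $O(1)$. Your proposed argument does not go through under the stated assumptions. Assumption (A2) with $m=1$ gives only $\E|\delta_t|\le a_1e^{-a_2}$, which is an $L^1$ bound, not a pointwise or high-probability bound on $\delta_t$; and stationarity plus (A5) gives you $\E[T^{-1}\sum_t x_{t,j}^2]=\Gamma_{jj}$, but (A5) bounds the \emph{smallest} eigenvalue, not the largest, and in any case you need concentration of $T^{-1}\sum_t x_{t,j}^2$ around its mean, which is essentially the content of Proposition~\ref{thm:bnd B}. Even granting both ingredients, the cross term $T^{-1}\sum_t x_{t,j}^2\delta_t$ couples two $\mathcal F_{t-1}$-measurable quantities and cannot be controlled with only an $L^1$ bound on $\delta_t$.

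The paper sidesteps this entirely. After truncating the increments at level $M$, the conditional second moments satisfy $\E[\xi_{it}^2 I(|\xi_{it}|\le M)\mid\mathcal F_{t-1}]\le M^2$ deterministically, so the predictable variation is at most $TM^2$ with no probabilistic argument at all (see the last line of the proof of Lemma~\ref{l:bndmartingale}). The Bernstein denominator is then $2M^2+Mx$; with $M\asymp(\epsilon+\log(Tn^2p))^{2/\alpha}$ and $x=\lambda/2\asymp M\sqrt{(\epsilon+\log(n^2p))/T}$, the exponent $Tx^2/(2M^2+Mx)$ is of order $\epsilon+\log(n^2p)$, which is precisely what the union bound needs. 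So there is no obstacle: use the crude $V\le M^2$ bound and the choice of $\lambda$ in \eqref{E:lambda} already absorbs it.
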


Suppose $\epsilon = \log(np)$, $n^2p>T$. The regularization parameter $\lambda$ satisfies
\[
    \lambda \gtrsim [\log(np)]^{2/\alpha}\sqrt{\frac{\log(np)}{T}},
\]
and $\pi_1(\lambda) \propto 1/n^2p$. This regularization parameter is $O([\log(np)]^{2/\alpha})$ larger, in rate, than one obtained in \citet[Proposition 7]{kWaTzL2017}. Their results relied heavily in $\y_t$ being a $\beta$-mixing sequence in a sense that the concentration inequality derived in \cite{fMmPeR2011} depends on it.
In our case, the dependence is characterized by the conditional variance of the innovation process and coefficients $\Phi_1,\Phi_2,...$, and we are not aware of "tight" concentration inequalities that hold under these assumptions. Nevertheless, for fixed $n$, it is possible to show that the concentration inequality for sub-Weibull martingales in Lemma \ref{l:bndmartingale} is tight \citep{xFiGqL2012large}.

Let $\b\Gamma_T = \X'\X/T$ denote the scaled Gram matrix and $\bs{\Gamma}$ its expected value. We show that if each element in $\b\Gamma_T$ is sufficiently close to its expectation, and Assumptions (A4) and (A5) hold, then RSC is satisfied with high probability.

\begin{lemma}[Restricted Strong Convexity]
    Suppose Assumptions (A4) -- (A5) hold and that $\mn{\b\Gamma_T-\b\Gamma}_{\max}\leq \frac{\sigma^2_\Gamma\eta^q}{64R_q}$. Then, for any $\b\Delta_i\in\C_i$ for $1\leq i\leq n$
    \begin{equation}
      \b\Delta_i'\b\Gamma_T\b\Delta_i\ge \frac{\sigma_\Gamma^2}{2}|\b\Delta_i|_2^2 - \frac{\sigma_\Gamma^2}{2}R_q\eta^{2-q}.
        \label{eq:rsc}
    \end{equation}
    \label{l:rsc}
\end{lemma}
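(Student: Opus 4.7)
The plan is to decompose the quadratic form through the population Gram matrix $\b\Gamma$ and absorb the deviation $\b\Gamma_T-\b\Gamma$ using the cone structure. Write
\[
\b\Delta_i'\b\Gamma_T\b\Delta_i = \b\Delta_i'\b\Gamma\b\Delta_i + \b\Delta_i'(\b\Gamma_T-\b\Gamma)\b\Delta_i,
\]
so that Assumption (A5) delivers the lower bound $\b\Delta_i'\b\Gamma\b\Delta_i \ge \sigma_\Gamma^2 |\b\Delta_i|_2^2$, while by the standard Hölder inequality $|\b\Delta_i'(\b\Gamma_T-\b\Gamma)\b\Delta_i| \le \mn{\b\Gamma_T-\b\Gamma}_{\max}\,|\b\Delta_i|_1^2$. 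The task then reduces to converting $|\b\Delta_i|_1$ into $|\b\Delta_i|_2$ plus a ``residual'' term coming from weak sparsity.

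The next step is to exploit the cone condition defining $\C_i$. Since $|\b\Delta_{i,\mathcal{M}_{i,\eta}^\perp}|_1 \le 3|\b\Delta_{i,\mathcal{M}_{i,\eta}}|_1 + 4|\b\beta^*_{i,\mathcal{M}_{i,\eta}^\perp}|_1$, one obtains $|\b\Delta_i|_1 \le 4|\b\Delta_{i,\mathcal{M}_{i,\eta}}|_1 + 4|\b\beta^*_{i,\mathcal{M}_{i,\eta}^\perp}|_1$. The first summand is handled by $|\b\Delta_{i,\mathcal{M}_{i,\eta}}|_1 \le \sqrt{|S_{i,\eta}|}\,|\b\Delta_i|_2$. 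The weak sparsity of Assumption (A4) controls both $|S_{i,\eta}|$ and the tail mass: each coordinate in $S_{i,\eta}$ contributes at least $\eta^q$ to the sum $\sum_j|\beta_{i,j}|^q\le R_q$, giving $|S_{i,\eta}| \le R_q\eta^{-q}$; and for $j\in S_{i,\eta}^c$ we use $|\beta_{i,j}| = |\beta_{i,j}|^q |\beta_{i,j}|^{1-q}\le \eta^{1-q}|\beta_{i,j}|^q$, so that $|\b\beta^*_{i,\mathcal{M}_{i,\eta}^\perp}|_1 \le R_q\eta^{1-q}$. Combining these two estimates produces $|\b\Delta_i|_1 \le 4\sqrt{R_q}\,\eta^{-q/2}|\b\Delta_i|_2 + 4R_q\eta^{1-q}$.

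Finally, square this bound via $(a+b)^2 \le 2a^2+2b^2$ to get $|\b\Delta_i|_1^2 \le 32 R_q\eta^{-q}|\b\Delta_i|_2^2 + 32 R_q^2\eta^{2-2q}$, and multiply by the assumed bound $\mn{\b\Gamma_T-\b\Gamma}_{\max} \le \sigma_\Gamma^2\eta^q/(64R_q)$. The two factors of $R_q$ and $\eta^q$ collapse cleanly, yielding
\[
|\b\Delta_i'(\b\Gamma_T-\b\Gamma)\b\Delta_i| \le \tfrac{\sigma_\Gamma^2}{2}|\b\Delta_i|_2^2 + \tfrac{\sigma_\Gamma^2}{2}R_q\eta^{2-q},
\]
which when subtracted from the population lower bound gives precisely \eqref{eq:rsc}.

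There is no real obstacle, only bookkeeping: the entire argument is deterministic given the maximum-norm control of $\b\Gamma_T-\b\Gamma$, and the constant $64$ in the hypothesis is calibrated exactly so that the cross term fits into half of $\sigma_\Gamma^2|\b\Delta_i|_2^2$. The only point requiring minor care is the $(a+b)^2\le 2a^2+2b^2$ split; tightening it would change only the numerical constants in the hypothesis, not the structure of the conclusion.
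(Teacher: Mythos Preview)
Your proof is correct and follows essentially the same route as the paper's: both decompose $\b\Delta_i'\b\Gamma_T\b\Delta_i$ into the population quadratic form plus the deviation, lower-bound the former via Assumption~(A5), control the latter by $\mn{\b\Gamma_T-\b\Gamma}_{\max}|\b\Delta_i|_1^2$, and then use the cone inequality together with the weak-sparsity bounds $|S_{i,\eta}|\le R_q\eta^{-q}$ and $|\b\beta^*_{i,\mathcal{M}_{i,\eta}^\perp}|_1\le R_q\eta^{1-q}$ and the split $(a+b)^2\le 2a^2+2b^2$. The only cosmetic difference is that the paper packages the Cauchy--Schwarz step through the subspace compatibility constant $\psi(\mathcal{M}_{i,\eta})=\sqrt{|S_{i,\eta}|}$, whereas you write it out explicitly; the arithmetic and constants are identical.
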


To show RSC holds with high probability for all $i=1,..,n$ at the same time, we have to bound the event
\begin{equation}
    \mathcal{B}(a) =\left\{\mn{\b\Gamma_T-\b\Gamma}_{\max}\leq a\right\}.
    \label{eq:Bi}
\end{equation}
where $a = \frac{\sigma^{2(1-q)}_\Gamma\lambda^q}{64R_q}$. If we assume distinct $R_{q,i}$ and $q_i$ for each equation, we should work with $\cap_i\mathcal{B}_i$ and $\mathcal{B}_i$ defined accordingly.

\begin{proposition}
    Suppose Assumptions (A1), (A2) and (A3) hold. If
    \[
        p< \frac{T^{\gamma_1\wedge\gamma_2}}{(\frac{2}{\gamma_1\wedge\gamma_2+1})(2 + \frac{1.4}{2\gamma_1 c_\phi\wedge a_2})},
    \]
    and
    \[
        a \ge \sqrt{\frac{2(1+\xi)^{1+2/\alpha}\tau^2[\log(npT)]^{1+2/\alpha}}{T}},
    \]
    for some $\xi>0$, then $\Pr(\mathcal{B}^c(a))  \le \pi_2(a)$, where
    \[
    \begin{split}
        \pi_2(a)
        & := \frac{2}{(np)^\xi T^{1+\xi}} + \frac{8}{(np)^\xi T^\xi}\\
        & +\frac{n^2}{a}\left(b_1e^{-c_\phi\wedge a_2(T/2)^{\gamma_2\wedge\gamma_1}} + b_8e^{-2\gamma_1c_\phi(T/2)^{\gamma_1}}\right).
    \end{split}
    \]
    \label{thm:bnd B}
\end{proposition}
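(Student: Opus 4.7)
The plan is to control $\mn{\b\Gamma_T - \b\Gamma}_{\max}$ via a union bound over the $(np)^2$ scalar entries, combined with a VMA($\infty$) expansion of each entry, a finite truncation argument based on Assumption (A1), and a martingale concentration inequality for sub-Weibull quadratic terms. First I would rewrite each entry of $\b\Gamma_T - \b\Gamma$ in terms of the innovations. Fix $a,b \in \{1,\ldots,n\}$ and lags $j,l \in \{1,\ldots,p\}$. Applying the VMA($\infty$) representation $y_{t-j,a} = \sum_{k \ge 0} \b\phi_{k,a}' \bu_{t-j-k}$ yields
\[
(\b\Gamma_T - \b\Gamma)_{(a,j),(b,l)} \;=\; \sum_{k,s \ge 0} \b\phi_{k,a}' \Big[\tfrac{1}{T}\sum_{t=1}^T (\bu_{t-j-k}\bu_{t-l-s}' - \E[\bu_{t-j-k}\bu_{t-l-s}'])\Big]\b\phi_{s,b},
\]
which reduces the problem to controlling centered sample outer-product averages of the innovations, weighted by VMA coefficients that are $\ell_1$-summable uniformly in the row index by (A1).

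Next I would truncate the double sum at $M = \lceil T/2 \rceil$ (which is where the $(T/2)^{\gamma_1 \wedge \gamma_2}$ in $\pi_2$ comes from) and split off the contribution of $k,s > M$. The $L^1$ norm of the tail contribution is bounded by combining $\sum_{k \ge M}|\b\phi_{k,a}|_1 \le \bar c_\Phi e^{-c_\phi M^{\gamma_1}}$ from (A1) with the uniform $L^2$ control on $\bb'\bu_t$ from (A3) and the mixingale bound from (A2). A Markov inequality at level $a$, followed by a union bound over the $n^2$ row-index pairs $(a,b)$ (lag indices contribute only a uniform factor absorbed into the constants), delivers the last term of $\pi_2(a)$; the summand with rate $(c_\phi \wedge a_2)M^{\gamma_1 \wedge \gamma_2}$ arises from pairing VMA decay with mixingale decay on the cross-covariance, while the rate $2\gamma_1 c_\phi M^{\gamma_1}$ on the other summand reflects the \emph{double} truncation $k,s > M$.

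For the main (truncated) part, which consists of at most $(np)^2 M^2$ averages of the form $T^{-1}\sum_t(u_{t-j-k,i}u_{t-l-s,i'} - \E[\cdot])$ with $k,s \le M$, I would apply Lemma \ref{l:bndmartingale}. Along the natural filtration $\{\m{F}_t\}$ one decomposes $\bu_t\bu_t' - \b\Sigma$ into a martingale difference part (whose $L^d$ increments are controlled by (A3) via Cauchy--Schwarz, giving sub-Weibull parameter $\alpha/2$) plus a mixingale remainder controlled by (A2). The martingale part then satisfies a concentration bound of the form $\exp(-cTa^2/[\log(npT)]^{1+2/\alpha})$ in the Bernstein regime, with an additional $T^{-1}$-sharper regime at small deviations. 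A union bound over the $(np)^2$ entries at threshold $a$, together with the stated lower bound on $a$, collapses $(np)^2$ copies of these sub-Weibull tails to the first two terms of $\pi_2(a)$, with $\xi$ parametrizing the slack in the exponent.

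The main obstacle is calibrating $M$ so as to simultaneously (i) make the VMA truncation tail negligible under (A1) and (A2), and (ii) keep the effective dependence length short enough in the martingale concentration step that $a$ can be taken as small as $\sqrt{[\log(npT)]^{1+2/\alpha}/T}$. The hypothesis $p < T^{\gamma_1 \wedge \gamma_2}/(\cdots)$ is precisely the compatibility condition that $Mp$---the effective number of retained VMA lags interacting with the $p$ regressor lags---does not corrupt the concentration rate. A secondary difficulty is that the truncated sums are quadratic in dependent sub-Weibull martingale components; bounding them requires a Cauchy--Schwarz reduction to $L^d$ norms of single innovations, which doubles the effective sub-Weibull index and explains the exponent $1+2/\alpha$ (rather than $2/\alpha$) in the lower bound on $a$.
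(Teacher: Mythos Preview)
Your decomposition differs from the paper's. The paper does \emph{not} VMA-expand the entries of $\b\Gamma_T-\b\Gamma$ and truncate the coefficients; it applies a telescoping martingale approximation directly to the product process, writing
\[
\y_t\y_{t-s}'-\E[\y_t\y_{t-s}']=\sum_{l=1}^m V_{l,t}^{(s)}+\big(\E[\y_t\y_{t-s}'\mid\mathcal{F}_{t-m}]-\E[\y_t\y_{t-s}']\big),\qquad m=T,
\]
with $V_{l,t}^{(s)}=\E[\y_t\y_{t-s}'\mid\mathcal{F}_{t-l+1}]-\E[\y_t\y_{t-s}'\mid\mathcal{F}_{t-l}]$. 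Lemma~\ref{l:bndmartingale} is applied to each of the $m$ martingale sums (the increments are sub-Weibull($\alpha/2$) by Cauchy--Schwarz and Lemma~\ref{l:moments}), which after a union bound over $n^2p^2m$ indices gives the first two terms of $\pi_2$. Only the \emph{remainder} $\E[\y_t\y_{t-s}'\mid\mathcal{F}_{t-m}]-\E[\y_t\y_{t-s}']$ is then VMA-expanded, into four pieces $A_1,\ldots,A_4$: the mixed rate $(c_\phi\wedge a_2)(T/2)^{\gamma_1\wedge\gamma_2}$ in $\pi_2$ is produced by $A_1$, where \emph{finite-lag} VMA weights $\b\Phi_j$ ($j<m-s$) multiply the mixingale bound $e^{-a_2(m-j-s)^{\gamma_2}}$ from (A2); the other exponential rate comes from the VMA-tail pieces $A_2,A_3,A_4$.

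Your route (VMA-expand first, truncate at $M=T/2$, then telescope $\bu_t\bu_t'-\b\Sigma$ for the same-time terms) can in principle be carried through, but two of your attributions are off. First, the hypothesis on $p$ is not a compatibility condition protecting the concentration rate; in the paper it is used exclusively in the Markov-bounded remainder, to guarantee $c(m-p)^\gamma-\log(p+1)\ge c(m/2)^\gamma$ so that the $(p+1)^2$ prefactor coming from the double sum over regressor lags can be absorbed into the exponential. Second, your decomposition would not naturally deliver the mixed exponent $c_\phi\wedge a_2$: if the VMA truncation and the mixingale telescoping are performed separately, the two tail contributions carry rates $e^{-2c_\phi M^{\gamma_1}}$ (from the VMA tail $k,s>M$) and $e^{-a_2 m'^{\gamma_2}}$ (from the mixingale remainder of $\bu_t\bu_t'-\b\Sigma$), yielding a bound of the same order as $\pi_2$ but with different constants in the exponents. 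The paper's interleaving---telescope first, VMA-expand only the conditional-expectation remainder---is precisely what couples the two decay mechanisms and produces the stated form of $\pi_2$.
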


This bound controls the proximity between the empirical and population covariance matrices. Similar concentration inequalities were derived by \cite[Lemma 9]{aKlC2015}, \cite[Lemma 14]{pLmW2012} and \cite{mcMeM2016ER}. Their results, however, cannot be applied in our setting. Explicit expressions for the constants $b_1$, $b_8$ and $\tau$ in Proposition \ref{thm:bnd B} are found in Lemma \ref{l:covbound}. Also, one may replace $\epsilon$ by its lower bound to remove dependence.

This concentration guided the choice of dependence condition used in this work. Traditionally one uses either a Hanson-Wright inequality or a Bernstein or Hoeffding type inequality to bound the empirical covariance around its mean. We write the centered Gram matrix $\b\Gamma_T-\b\Gamma$ as a sum of martingales and a dependence term. The martingales are handled using a Bernstein type bound and the dependence term is handled using both assumptions (A1) and (A2). Combined, they imply a sub-Weibull type decay on expected value of dependence term.

Finally,  we use the bounds $\pi_1(\cdot)$ and $\pi_2(\cdot)$ in Proposition \ref{thm:bnd A} and Proposition \ref{thm:bnd B} respectively,  to derive an upper bound for the prediction error and for the difference between the lasso parameter estimates and the true parameters in the $\ell_2$ norm.

\begin{theorem}
Suppose assumptions (A1) -- (A3) hold.  Under conditions of Proposition \ref{thm:bnd A}, there exists $T_0>0$ such that for all $T\ge T_0$,
    \[
        |\b X(\widehat{\b\beta}_i - {\b\beta_i}^*)/T|_2^2 \le 12|{\b\beta_i}^*|_1\lambda , \quad i=1,...,n,
    \]
with probability at least $1-\pi_1(\epsilon)$ for $\epsilon>0$. Suppose  further that assumptions (A4) and (A5) hold.  Set $\eta = \lambda/\sigma^2_\Gamma$. Under conditions of Propositions \ref{thm:bnd A} and \ref{thm:bnd B}, there exists $T_0>0$ such that for all $T\ge T_0$,
    \[
        |\widehat{\b\beta}_i - {\b\beta_i}^*|_2^2 \le (44+2\lambda)R_q\left(\frac{\lambda}{\sigma_\Gamma^2}\right)^{2-q},\quad i=1,...,n,
    \]
with probability at least $1-\pi_1(\epsilon) -\pi_2\left(\frac{\sigma^{2(1-q)}_\Gamma\lambda^q}{64R_q}\right)$.
    \label{thm:l2bnd}
\end{theorem}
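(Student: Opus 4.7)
The strategy is classical: on a large-probability event where both the deviation bound (DB) and restricted strong convexity (RSC) hold, I apply the general oracle framework of \citet{sNpRmWbY2012}[Theorem 1] to each equation. Propositions \ref{thm:bnd A} and \ref{thm:bnd B} already provide uniform-in-$i$ control, so no additional union bound beyond $\pi_1(\epsilon)$ and $\pi_2(\cdot)$ is needed. For the prediction claim, I would work on $\bigcap_{i=1}^n \mathcal{D}_i(\lambda)$, which has probability at least $1-\pi_1(\epsilon)$. Writing $\b\Delta_i=\widehat{\b\beta}_i-\b\beta_i^*$ and expanding the optimality inequality $\mathcal{L}_T(\widehat{\b\beta}_i)+\lambda|\widehat{\b\beta}_i|_1 \le \mathcal{L}_T(\b\beta_i^*)+\lambda|\b\beta_i^*|_1$ yields
\[
\frac{1}{T}|\X\b\Delta_i|_2^2 \le \frac{2}{T}\b\Delta_i'\X'\bU_i + \lambda(|\b\beta_i^*|_1 - |\widehat{\b\beta}_i|_1).
\]
Under $\mathcal{D}_i(\lambda)$, Hölder gives $|2\b\Delta_i'\X'\bU_i/T|\le \lambda|\b\Delta_i|_1$, and two applications of the triangle inequality on $|\b\Delta_i|_1$ and $|\widehat{\b\beta}_i|_1$ produce a slow-rate prediction bound that is a constant multiple of $\lambda|\b\beta_i^*|_1$, giving the first display.

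For the $\ell_2$ claim, I would additionally condition on $\mathcal{B}(a)$ from Proposition \ref{thm:bnd B} with $a=\sigma_\Gamma^{2(1-q)}\lambda^q/(64R_q)=\sigma_\Gamma^2\eta^q/(64R_q)$; this is exactly the threshold needed to invoke Lemma \ref{l:rsc}, which then delivers $\b\Delta_i'\b\Gamma_T\b\Delta_i \ge (\sigma_\Gamma^2/2)|\b\Delta_i|_2^2 - (\sigma_\Gamma^2/2)R_q\eta^{2-q}$. Meanwhile, Lemma \ref{l:solutionset} places $\b\Delta_i$ inside the cone $\C_i$, so the basic inequality above refines, via decomposability of $|\cdot|_1$ with respect to $\mathcal{M}(S_{i,\eta})$, into
\[
\b\Delta_i'\b\Gamma_T\b\Delta_i \le 2\lambda\sqrt{|S_{i,\eta}|}\,|\b\Delta_i|_2 + 2\lambda|\b\beta^*_{i,S_{i,\eta}^c}|_1.
\]
Combining the two displays with the elementary quadratic estimate $2\lambda\sqrt{|S_{i,\eta}|}\,|\b\Delta_i|_2 \le (\sigma_\Gamma^2/4)|\b\Delta_i|_2^2 + 4\lambda^2|S_{i,\eta}|/\sigma_\Gamma^2$ isolates $|\b\Delta_i|_2^2$ on the left-hand side.

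It remains to translate $|S_{i,\eta}|$ and $|\b\beta^*_{i,S_{i,\eta}^c}|_1$ into weak-sparsity quantities. Under Assumption (A4), a Markov-type argument gives $|S_{i,\eta}|\le R_q\eta^{-q}$ and $|\b\beta^*_{i,S_{i,\eta}^c}|_1\le R_q\eta^{1-q}$. Substituting $\eta=\lambda/\sigma_\Gamma^2$ collapses each of the three resulting contributions into a common multiple of $R_q(\lambda/\sigma_\Gamma^2)^{2-q}$; plugging the prediction bound from the first part into a residual accounts for the additive $2\lambda$ inside the constant $44+2\lambda$. A simple union bound gives the stated probability $1-\pi_1(\epsilon)-\pi_2(\sigma_\Gamma^{2(1-q)}\lambda^q/(64R_q))$, and $T_0$ is chosen large enough that the regimes of Propositions \ref{thm:bnd A} and \ref{thm:bnd B} (in particular $T>\epsilon+\log(n^2p)$ and the upper bound on $p$) are met. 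The main difficulty is not conceptual but arithmetic: matching the constants coming from the cone definition ($3$ and $4$), the quadratic estimate, and the RSC tolerance so that they consolidate into the stated form. The heavy analytical lifting has already been absorbed by Lemmas \ref{l:solutionset} and \ref{l:rsc} and by the two propositions.
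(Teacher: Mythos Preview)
Your proposal is correct and follows essentially the same route as the paper: the paper invokes \citet[Theorem 1]{sNpRmWbY2012} as a black box with $\kappa_\mathcal{L}=\sigma_\Gamma^2/2$, $\tau_\mathcal{L}^2=\sigma_\Gamma^2\eta^{2-q}R_q/2$, $\psi^2(\mathcal{M}_{i,\eta})=|S_{i,\eta}|\le R_q\eta^{-q}$, and $|\b\beta^*_{i,\mathcal{M}^\perp_{i,\eta}}|_1\le R_q\eta^{1-q}$, whereas you unpack the same basic-inequality/RSC argument by hand; the ingredients (Lemma~\ref{l:solutionset}, Lemma~\ref{l:rsc}, Propositions~\ref{thm:bnd A}--\ref{thm:bnd B}, and the weak-sparsity substitutions with $\eta=\lambda/\sigma_\Gamma^2$) are identical. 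One minor correction: in the paper's bookkeeping the $2\lambda$ in $(44+2\lambda)$ arises from the RSC tolerance term $(\lambda/\kappa_\mathcal{L})\cdot 2\tau_\mathcal{L}^2$, not from feeding the slow-rate prediction bound back in.
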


Theorem \ref{thm:l2bnd} states that, with high probability, estimated and population parameter vectors are close to each other in the Euclidean norm. It requires that Propositions \ref{thm:bnd A} and \ref{thm:bnd B} hold jointly, meaning that $\lambda$, $R_q$ and $\sigma_\Gamma^2$ must satisfy rate conditions. We show that if the size of 'small' coefficients and smallest eigenvalue $\sigma_\Gamma^2$ of $\Gamma$ are restricted, then the rate of $\lambda$ in Proposition \ref{thm:bnd A} is unaffected. For $\epsilon = \log(np)$ and $T<np^2$ Proposition \ref{thm:bnd A} requires, after simplification,
\[\lambda\ge \tau^*\log(np)^{2/\alpha}\sqrt{\frac{4\log(np)}{T}},\]
for some constant $\tau^*$. Replacing $a$ by $\frac{\sigma^{2(1-q)}_\Gamma\lambda^q}{64R_q}$ in Proposition \ref{thm:bnd B} we obtain
\[
  \lambda^q \gtrsim \left(\log(np)^{2/\alpha}\sqrt{\frac{\log(np)}{T}}\right)\times \left(\frac{R_q}{\sigma_\Gamma^{2(1-q)}\log(np)^{1/\alpha}}\right).
\]
However, it is not necessarily a constraint in the rate of $\lambda$. Propositions \ref{thm:bnd A} and \ref{thm:bnd B} will hold jointly for $T$ sufficiently large for $0\le q<1$ if
\[
    \frac{R_q}{\sigma_\Gamma^{2(1-q)}} = o\left(\log(np)^{(2q-1)/\alpha}\left({\frac{T}{\log(np)}}\right)^{(1-q)/2}\right).
\]
In other words, if the \emph{small} parameters are not too large and smallest eigenvalue of $\b\Sigma$ is not too small as a function of $T$.

\subsection{Simulation}\label{S:Simulation}

In order to evaluate the finite-sample performance of the LASSO estimator in large VARs with stochastic volatility, we consider the following model:
\begin{equation}\label{E:simul}
\begin{split}
\y_t &= \b{A}_0 + \b{A}_1\y_{t-1} + \b{A}_4\y_{t-4} + \b{H}_t^{1/2}\b{v}_t,\quad \b{v}_t\sim\mathsf{N}(\b{0},\b{I})\\
\b{H}_{t+1} &=\b{C}_0 + \b{\Psi}\b{H}_t\b{\Psi}'+\b\epsilon_t\b\epsilon_t',\quad \b\epsilon_t\sim\mathsf{N}(\b{0},\b{I}).
\end{split}
\end{equation}

We consider 1,000 replications of model \eqref{E:simul} with $T=100,300$ observations and the number of series is a function of the sample size, i.e., $n=c\times T$, where $c=\{1,2,3\}$. $\b{A}_1$ and $\b{A}_4$ are two block-diagonal matrices with blocks of dimension $5\times 5$ and entries equal to 0.15 and -0.1 respectively. $\b{C}_0$ and $\b\Psi$ are two diagonal matrices with diagonal elements all equal to 1e-5 and 0.8,  respectively.

This model satisfies Assumptions (A1) -- (A5). First, Assumption (A1) is satisfied because the model is block diagonal, with fixed block size. Assumptions (A2) and (A3) follows as in Example \ref{ex:ar1} given diagonal specification of $\b C_0$ and $\b\Psi$. Assumption (A4) is satisfied with $q=0$ and $R=10$. Finally, assumption (A5) holds because lower bound in equation \eqref{eq:bnd_min_eig}, is strictly positive ($\sum_{i=1}^4(\mn{\b A_i}_1+\mn{\b A_i}_\infty)<\infty$ and $\Lambda_{\min}(\b\Sigma)>0$, independently of $n$ and $p$).

Table \ref{T:simul} reports the simulation results of the LASSO estimation of model \eqref{E:simul} for different combinations of sample size and number of variables. The penalty parameter of the LASSO is selected by the BIC. Panel (a) reports the mean squared error of the estimation of the VAR parameters. Panel (b) reports the ratio of the mean squared out-of-sample forecasting error of the LASSO with respect to the Oracle forecast.

\begin{table}[htbp]
\caption{\textbf{Simulation Results.}}\label{T:simul}
\begin{minipage}{0.9\linewidth}
\begin{footnotesize}
The table reports the simulation results of the LASSO estimation of model \eqref{E:simul} for different combinations of sample size and number of variables. The penalty parameter of the LASSO is selected by the BIC. Panel (a) reports the mean squared error of the estimation of the VAR parameters. Panel (b) reports the ratio of the mean squared out-of-sample forecasting error of the LASSO with respect to the Oracle forecast.
\end{footnotesize}
\vspace{0.5cm}
\end{minipage}
\begin{threeparttable}
\begin{tabular}{ccccc}
\hline
\multicolumn{5}{c}{\underline{Panel (a): \textbf{Mean Squared Estimation Error}}} \\
$T$     && $n=T$ & $n=2T$ & $n=3T$\\
\hline
100     &&  0.1000 & 0.1926 & 0.2871\\
300     &&  0.0288 & 0.0576 & 0.0914\\
\\
\multicolumn{5}{c}{\underline{Panel (b): \textbf{Mean Squared Forecasting Error}}} \\
$T$     && $n=T$ & $n=2T$ & $n=3T$\\

\hline
100     &&   1.0900  & 1.1456 & 1.2891\\
300     &&   1.0150  & 1.0248 & 1.0345\\
\hline
\end{tabular}
\end{threeparttable}
\end{table}

\section{Discussion}\label{S:Discussion}

This work provides finite sample $\ell_2$ error bounds for the equation-wise LASSO parameters estimates of a weakly sparse, high-dimensional, VAR($p$) model, with dependent and heavy tailed innovation process. It covers a large collection of specifications as illustrated in section \ref{S:examples}.

A distinctive feature of this work is that the dependence structure of the innovations are characterized by a very weak projective dependence condition that is naturally verifiable in settings where one is interested in the conditional variance of the process. The series of innovations is not necessarily mixing nor the resulting time series $\{\y_t\}$.

Our bounds hold under a heavy tailed setting in a sense that we do not require the moment generating function to exist. Despite the tails in $\{\y_t\}$ being sub-Weibull as in \cite{kWaTzL2017}, we are not able to recover the same rates and lower bound for the regularization parameter $\lambda$. The reason is that \citet{kWaTzL2017} bounds rely heavily on the concentration inequality for mixing sequences in \citet{fMmPeR2011}. Given the weak projective dependence adopted, we chose to use a martingale concentration and overcome all together the issue of using the dependence metric for deriving the concentration bound. Nevertheless, we believe the loss in efficiency is minimal. Close inspection of proof of Lemma \ref{l:bndmartingale} shows that the loss of efficiency is concentrated in bounding the tail. It amounts to an extra $\log(T)$ term, which does not change the rates under assumption that $T < n^2p$, eventually.

A limitation of this work is the restriction that the model is correctly specified in the mean, in a sense that innovations are martingale differences. Nevertheless, this assumption is standard in the literature and we are able to derive results covering a broad range of data generating processes and conditional dependence measures. The martingale difference condition cannot be relaxed at this moment as our deviation bound depends on it. Furthermore, we do not require strong sparsity in a sense that near zero coefficients are effectively treated as zero as long as they are concentrated in some slowly increasing $\ell_q$ ball ($0\leq q<1$) around the origin.

Results in this paper can be  extended to polynomial tails. The strategy is to replace the martingale concentration in Lemma \ref{l:bndsum1}, used to prove Propositions \ref{thm:bnd A} and \ref{thm:bnd B} by
\[\Pr\left(\max_{1\le i\le n}|\sum_{t=1}^T\xi_{it}|>Ta\right)\le \frac{nK}{(a\sqrt{T})^ {d}},\]
whenever $\|\xi_{it}\|_d<\infty$. If available under our dependence conditions, one could employ a Fuk-Nagaev type inequality. Nevertheless, it follows that under appropriate changes to concentration rates, equation-wise LASSO estimators also admit oracle bounds. A direct consequence is that moments conditions on \cite{mCxC2002} and \cite{cHaP2009factor,cHaP2009garch} are directly applicable.

Despite working with a relatively simple structure and estimation model, the machinery can be applied to more complex settings. The key points are showing that the empirical covariance concentrates around its mean in terms of its maximum entry-wise norm and the concentration inequality for large dimensional, sub-Weibull martingales. Following development of \citet{sNpRmWbY2012}, the results may be naturally extended to structured regularization with node-wise regression and replacing using the Frobenius norm for system estimation. Finally, the high-dimensional VAR specification encompasses large dimensional vector-panels among other models.

In \citet{rAsSiW2020}, authors consider a near epoch dependent time series. This condition covers misspecification and non-Gaussian, conditionally heteroskedastic models, such as GARCH innovations. The main focus of the authors is on inference using the desparsified lasso, but estimation error bounds are also derived. Assumptions are in the same line of \citet{mcMeM2015}, where concentration bounds are assumed to hold in probability. In contrast, we focus on finite sample error bounds with relatable dependence condition on the conditional covariance. The heavy lifting in our paper is to derive the required concentration inequalities. Effectively, one could use our results to provide theoretical justification for the concentration bounds under particular model specifications. On the other hand, \citet{rAsSiW2020} provides theoretical justification for desparsified inference in some large dimensional models discussed in our paper.

\newpage
\appendix
\numberwithin{equation}{section}

\section{Proof of main results}
\subsection{Proof of Lemma \ref{l:solutionset}}
    We apply \citet[Lemma 1]{sNpRmWbY2012}. The empirical loss $\mathcal{L}_T(\b\beta_i)$ is convex for each $i$. Proposition \ref{thm:bnd A} ensures each \eqref{eq:db} hold with desired probabilities.\qed

\subsection{Proof of Proposition \ref{thm:bnd A}}

Write the event $\mathcal{A}_i = \{\max_j|\bu_i'\x_j|<T\lambda_0/2\}$. We shall derive probability bounds for $\Pr(\cap_{i=1}^n\mathcal{A}_i) \ge 1-\Pr(\max_{i,j} |\bu_i'\x_j|\ge T\lambda_0/2)$. We bound the probability using Corollary \ref{c:bndmdorlicz}.

Under Assumption (A2), $\bu_i'\x_j = \sum_{t=p}^Tu_{ti}y_{t-s,j}$ ($s=1,...,p$ and $i,j = 1,...,n$) is a martingale and each $u_{ti}y_{t-s,j}$ is a martingale difference process. Hence, we follow by applying Corollary \ref{c:bndmdorlicz}. Conditions on $T$ and $\lambda_0/2$ are already satisfied. We need to show that $u_{ti}y_{t-s,j}$ is sub-Weibull. For each $d\ge 1$, $\|u_{ti}y_{t-s,j}\|_d \le \|u_{ti}\|_{2p}^{1/2}\|y_{t-s,j}\|_{2p}^{1/2}\le \bar{c}_\Phi \max_{|\bb|_1\le 1}\|\bb'\bu\|_{2p}$, by Lemma \ref{l:moments}. Then, it follows from \citet[Lemma 5 and Lemma 6]{kWaTzL2017} and Assumption (A3) that $u_{ti}y_{t-s,j}$ is sub-Weibull with parameter $\alpha/2$. Hence, there is some constant $\tau^*$ depending on $\tau$, $\bar{c}_\Phi$ and $\alpha$ such that $\Pr(|u_{ti}y_{t-s,j}|>x)\le 2\exp(-|x/\tau^*|^{\alpha/2})$. Result follows.

\subsection{Proof of Lemma \ref{l:rsc}}
    For notational simplicity, write $\|\b\Gamma_T-\b\Gamma\|_{\max}\le\delta\le\sigma_\Gamma^2/64\psi^2(\mathcal{M}_{i,\eta})$ where $\psi(\mathcal{M}_{i,\eta}) = \sup_{u\in\C_i}|u|_1/|u|_2 = \sqrt{|S_{i,\eta}|}$. Using the arguments in \cite[section 4.3]{sNpRmWbY2012},
    $|\b\beta_{i,\mathcal{M}^\perp_{i,\eta}}|_1\le \sum_{j\in S_{i,\eta}^c}|\beta_{i,j}|^q|\beta_{i,j}|^{1-q} \le \eta^{1-q} R_q$, and $R_q \ge  \sum_{j\in S_{i\eta}}|\beta_{i,j}|^q \ge |S_{i,\eta}|\eta^q$. Hence $\frac{\sigma^2_\Gamma\eta^q}{64R_q} \le \frac{\sigma_\Gamma^2}{64\psi^2(\mathcal{M}_{i,\eta})}$. It follows that
    \begin{eqnarray*}
        \b\Delta_i'\b\Gamma_T\b\Delta_i
        &=& \b\Delta_i'\Gamma\b\Delta_i + \b\Delta_i'[\b\Gamma_T-\b\Gamma]\b\Delta_i\\
        &\ge& |\b\Delta_i|_2^2\inf_{\bu\in\C_i\setminus\{0\}}\frac{\bu'\b\Gamma \bu}{\bu'\bu} - |\b\Delta_i|_1|[\b\Gamma_T-\b\Gamma]\b\Delta_i|_{\infty}\\
        &\ge& \sigma^2_\Gamma|\b\Delta_i|_2^2 - |\b\Delta_i|_1^2\mn{\b\Gamma_T-\b\Gamma}_{\max}\\
        &\ge& \sigma^2_\Gamma|\b\Delta_i|_2^2 - \delta |\b\Delta_i|_1^2\\
        &\ge& \sigma^2_\Gamma|\b\Delta_i|_2^2 - \delta \left(4|\b\Delta_{i,\mathcal{M}_{i,\eta}}|_1 +4|\b\beta_{i,\mathcal{M}^\perp_{i,\eta}}|_1\right)^2\\
        &\ge& |\b\Delta_i|_2^2\left(\sigma^2_\Gamma - 32\delta\psi(\mathcal{M}_{i,\eta})^2\right) + 32 \delta |\b\beta_{i,\mathcal{M}^\perp_{i,\eta}}|_1^2\\
        %&=& |\b\Delta_i|_2^2\left(\sigma^2_\Gamma - 32\delta|S_{i,\eta}|\right) + 32 \delta |\b\beta_{i,\mathcal{M}^\perp_{i,\eta}}|_1^2\\
        &\ge& |\b\Delta_i|_2^2\frac{\sigma^2_\Gamma}{2} - \frac{\sigma_\Gamma^2}{2\psi^2(\mathcal{M}_{i,\eta})} |\b\beta_{i,\mathcal{M}^\perp_{i,\eta}}|_1^2\\
        &\ge& |\b\Delta_i|_2^2\frac{\sigma^2_\Gamma}{2} - \frac{\sigma_\Gamma^2}{2R_q\eta^{-q}}\eta^{2(1-q)} R_q^2\\
        &=& |\b\Delta_i|_2^2\frac{\sigma^2_\Gamma}{2} - \frac{\sigma_\Gamma^2}{2}\eta^{2-q} R_q,
    \end{eqnarray*}
    proving the result.\qed

\subsection{Proof of Proposition \ref{thm:bnd B}}
    The proof consists on a trivial application of Lemmas \ref{l:covbound} setting $\epsilon = \sigma_\Gamma^{2(1-q)}\lambda^q/64R_q$.
\qed

\subsection{Proof of Theorem \ref{thm:l2bnd}}
    We apply \cite[Theorem 1]{sNpRmWbY2012}. Lemma \ref{l:solutionset} ensures $\lambda$ is selected accordingly, $\mathcal{L}_T(\b\beta_i)$ is a convex function of $\b\beta_i$, Lemma \ref{l:rsc} ensures RSC is satisfied with $\kappa_\mathcal{L} = \sigma_\Gamma^2/2$ and $\tau_\mathcal{L}^2(\b\beta_i) = \frac{\sigma_\Gamma^2\eta^{2-q}R_q}{2}$. Define $\psi(\mathcal{M}_{i,\eta})$ as in the proof of Lemma \ref{l:rsc} and recall $|S_{i,\eta}| \le R_q\eta^{-q}$ and $|{\b\beta}_{i,\mathcal{M}_{i,\eta}^\perp}|_1 \le R_q\eta^{1-q}$, and that $\eta = \lambda/\sigma_\Gamma^2$. For each $i$,
    \begin{align*}
        |\widehat{\b\beta}_i - {\b\beta}^*|_2^2
        &\le 9\frac{\lambda}{\kappa_\mathcal{L}^2}\psi^2(\mathcal{M}_{i,\eta}) + \frac{\lambda}{\kappa_\mathcal{L}}\left[ 2\tau_\mathcal{L}^2({\b\beta}_i^*) + 4|{\b\beta}_{i,\mathcal{M}_{i,\eta}^\perp}|_1 \right]\\
        &\le 36\frac{\lambda}{\sigma_\Gamma^4}R_q\eta^{-q} + 2\frac{\lambda}{\sigma_\Gamma^2}\left[ R_q\eta^{2-q}\sigma_\Gamma^2 + 4R_q\eta^{1-q} \right]\\
        &\le 36\frac{\lambda}{\sigma_\Gamma^4}R_q\left(\frac{\lambda}{\sigma_\Gamma^2}\right)^{-q} + 2\frac{\lambda}{\sigma_\Gamma^2}\left[ R_q\left(\frac{\lambda}{\sigma_\Gamma^2}\right)^{2-q}\sigma_\Gamma^2 + 4R_q\left(\frac{\lambda}{\sigma_\Gamma^2}\right)^{1-q} \right]\\
        &\le 36R_q\left(\frac{\lambda}{\sigma_\Gamma^2}\right)^{2-q} + 2\lambda R_q\left(\frac{\lambda}{\sigma_\Gamma^2}\right)^{2-q} + 8R_q\left(\frac{\lambda}{\sigma_\Gamma^2}\right)^{2-q} \\
        &= (44+2\lambda)R_q\left(\frac{\lambda}{\sigma_\Gamma^2}\right)^{2-q}.
    \end{align*}
\qed

\section{Auxiliary Lemmata}
\subsection{Properties of $\y_t$}\label{a:moments}
In this section we will derive properties of the process $\{\y_t\}$ described in \eqref{eq:dgp}

\begin{lemma} Suppose that for some norm $\|\cdot\|_\psi$  we have
    \[
        \max_t\max_{|\bb|_1\le 1}\|\bb'\bu_t\|_\psi \le c_\psi,
    \]
for some constant $c_\psi<\infty$ that only depends on the norm $\|\cdot\|_\psi$. Then, under conditions (A1) - (A2), for all $t$ and $i\in\{1,\dots,n\}$,
    \[
        \|y_{i,t}\|_\psi \le c_\Phi\,\times\sum_{j=0}^\infty|\b e_i'\Phi_j|_1.
    \]
    \label{l:moments}
    \end{lemma}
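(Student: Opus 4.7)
The plan is to invoke the VMA($\infty$) representation \eqref{eq:vmainfty} guaranteed by Assumption (A1) and then reduce the problem to the hypothesis on linear combinations of $\bu_t$ by normalizing the coefficient vectors to have unit $\ell_1$-norm.

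First, I would write $y_{i,t} = \b e_i' \y_t = \sum_{j=0}^\infty \b e_i' \b\Phi_j \bu_{t-j}$, using that under (A1) the series \eqref{eq:vmainfty} converges (with the tail summability in \eqref{eq:a1tailsum} ensuring in particular that $\sum_{j=0}^\infty |\b e_i' \b\Phi_j|_1$ is finite, so the representation is well-defined). Next, for each fixed $j$ such that $\b e_i' \b\Phi_j \ne 0$, define $\bb_j := (\b e_i' \b\Phi_j)' / |\b e_i' \b\Phi_j|_1$, so that $|\bb_j|_1 = 1$. Then $\b e_i' \b\Phi_j \bu_{t-j} = |\b e_i' \b\Phi_j|_1 \cdot \bb_j' \bu_{t-j}$, and by homogeneity of the norm and the hypothesis,
\[
\|\b e_i' \b\Phi_j \bu_{t-j}\|_\psi = |\b e_i' \b\Phi_j|_1 \cdot \|\bb_j' \bu_{t-j}\|_\psi \le c_\psi \, |\b e_i' \b\Phi_j|_1.
\]
(When $\b e_i' \b\Phi_j = 0$ the inequality is trivial.)

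Finally, I would apply the triangle inequality to pass the $\|\cdot\|_\psi$ norm inside the series. For the finite truncation $y_{i,t}^{(N)} = \sum_{j=0}^N \b e_i' \b\Phi_j \bu_{t-j}$ the bound $\|y_{i,t}^{(N)}\|_\psi \le c_\psi \sum_{j=0}^N |\b e_i' \b\Phi_j|_1$ is immediate, and I would take $N \to \infty$ using Fatou's lemma (recall $\|X\|_\psi$ here is the $L^r$-type norm used throughout the paper, so Fatou applies to $|y_{i,t}^{(N)}|^r$ via the a.s.\ convergence of the VMA series under (A1)), yielding
\[
\|y_{i,t}\|_\psi \le c_\psi \sum_{j=0}^\infty |\b e_i' \b\Phi_j|_1,
\]
which matches the claimed bound (modulo the evident typo $c_\Phi$ for $c_\psi$ in the statement).

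The only real obstacle is the passage from finite to infinite sums under a general norm $\|\cdot\|_\psi$; this is handled by the truncation/Fatou argument, which is routine once we note that Assumption (A1) delivers the almost-sure convergence of the partial sums and absolute summability of the coefficient norms. Assumption (A2) enters only implicitly, through ensuring covariance stationarity so that the bound holds uniformly in $t$.
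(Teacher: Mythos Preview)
Your proposal is correct and follows essentially the same route as the paper: invoke the VMA($\infty$) representation from (A1), normalize each row $\b e_i'\b\Phi_j$ to unit $\ell_1$-norm so the hypothesis on $\bb'\bu_t$ applies, and sum via the triangle inequality. You are in fact slightly more careful than the paper in justifying the passage from the finite truncation to the infinite series (the paper simply applies the triangle inequality to the infinite sum without comment), and you correctly flag the $c_\Phi$/$c_\psi$ typo. One minor remark: the paper intends $\|\cdot\|_\psi$ to include general Orlicz norms (see the paragraph immediately following the proof), not only $L^r$ norms, but your Fatou-type limit argument goes through unchanged since Orlicz norms are lower semicontinuous under almost-sure convergence.
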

    \begin{proof}
    Under assumption (A1) the VAR model in \eqref{eq:dgp} admits the VMA($\infty$) representation \eqref{eq:vmainfty} for all $n$ and $p$. Let $\{\b e_i = (0,...,0,1,0,...,0)', i=1,...,n\}$ the canonical basis vectors. Then, for all $i$, $y_{i,t} = e_i'\y_t$ and
    \begin{align*}
        \|\b e_i'\y_{t}\|_\psi
            &= \left\|\sum_{j=0}^\infty \b e_i'\b \Phi_j\bu_{t-j}\right\|_\psi\\
            &= \left\|\sum_{j=0}^\infty \sum_{k=1}^n \b e_i'\b\Phi_j\b e_k u_{k,t-j}\right\|_\psi\\
            &= \left\|\sum_{j=0}^\infty |\b e_i'\b\Phi_j|_1 \sum_{k=1}^n \frac{\b e_i'\b\Phi_j\b e_k}{|\b e_i'\b\Phi_j|_*} u_{k,t-j}\right\|_\psi\\
            &\le \left( \sum_{j=0}^\infty |\b e_i'\b\Phi_j|_1\right)\max_t\max_{|\bb|_1 \le 1}\left\|\bb'\bu_{t}\right\|_\psi \\
            &\le \sum_{j=0}^\infty|\b e_i'\b\Phi_j|_1 \times c_\psi,
    \end{align*}
    where $|\cdot|_* := |\cdot|_1I(|\cdot|>0)+I(|\cdot|_1=0)$.
    \end{proof}

    Due stability condition (A1), for each $n$ and $p$, there exists $\bar{c}_\Phi$ such that $\sum_{i=0}^\infty |\b\phi_{i,j}|_1\le \bar{c}_\Phi$ for all $j = 1,...,n$.  Let $\|\cdot\|_{\psi}$ be the Orlicz norm,
    \[
        \|\cdot\|_{\psi} = \inf\{c>0:\psi(|\cdot|/c)\le 1\},
    \]
    where $\psi(\cdot):\R^+\mapsto\R^+$ is convex, increasing function with $\psi(0) = 0$ and $\psi(x)\rightarrow\infty$ as $x\rightarrow\infty$. Traditional choices of $\psi(\cdot)$ are (a) $\psi(x) = x^p$, $p\geq 1$, (b) $\psi(x) = \exp(x^a)-1$, $a>1$, and (c) $\psi(x) = (ae)^{1/a}xI(x\le a^{-1/a})+\exp(x^a)I(x>a^{-1/a})$. These choices contemplate sub-Gaussian and sub-exponential tails, as well as process with heavy-tails, such as sub-Weibull and polynomial tails. Note that by combining this result with \citep[Lemma 5 and Lemma 6]{kWaTzL2017} if $\{\bb'\bu_t\}$ are sub-Weibull, so are $\{\bb'\y_t\}$.

    Assumption (A1) is satisfied under restrictions on the parameter space. The stability assumption is standard in the literature whereas the tail sum \eqref{eq:a1tailsum} requires further constraints on the parameter matrices. Lemma \ref{l:bndsum1} presents a sufficient set of restrictions on the sparse parameter matrices $\b{A}_1,...,\b{A}_p$ so that \eqref{eq:a1tailsum} is satisfied.
    \begin{lemma}\label{l:bndsum1}
    Suppose that for all $n$ and $p$, there exists some $\rho > 0$ such that
    \[
        \sum_{k=1}^p\mn{\b{A}_k}_\infty = \sum_{k=1}^p\max_{j=1,...,n}|\b{a}_{k,j}|_1 \le e^{-\rho},
    \]
    where $\b{A}_k = [\b{a}_{k,1}: \cdots: \b{a}_{k,n}]'$. Then for every $h = 1,...,n$,
    \begin{enumerate}
    \item [i.] $|\b\phi_{k,h}|_1 \le \sum_{j=1}^{p\wedge k} \mn{\b{A}_j}_\infty|\b\phi_{k-j,h}|_1$, $k=1, 2, ...$
    \item [ii.] $\sum_{k=m}^\infty |\b\phi_{k,h}|_1 \le c_0\,e^{-m\rho}$, $m\ge 1$, provided that for all $p$,
    \begin{equation}
        \max_{h=1,...,n}\max_{k=1,...,p}e^{k\rho}\times\sum_{j=1}^k\tilde{\alpha}_j |\b\phi_{j,h}|_1 \le (1-e^{-\rho})c_0,
    \end{equation}
    where $\alpha_i = e^{\rho}\mn{\b{A}_i}_\infty$ and $\tilde{\alpha_i} = \sum_{\b{i}:|\b{i}| = k-p+j}\prod_{l=1}^{k-p}\alpha_{i_l}$ where $\b{i} = (i_1,...,i_{k-p})$ is a multi-index.
    \end{enumerate}
    \end{lemma}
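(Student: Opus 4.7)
The plan is to handle (i) by reading off the Yule--Walker-type recursion for the VMA coefficients, and (ii) by iterating that recursion a carefully chosen number of times and collecting the resulting combinatorial weights, matched against the extra hypothesis on the initial block.

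For (i), the VMA($\infty$) representation in \eqref{eq:vmainfty} together with the VAR recursion in \eqref{eq:dgp} gives, upon matching the coefficient of $\bu_{t-k}$ on each side, the standard matrix identity $\b\Phi_0=\b I_n$ and $\b\Phi_k=\sum_{j=1}^{p\wedge k}\b A_j\b\Phi_{k-j}$ for $k\ge 1$ (with the convention $\b\Phi_k=\b 0$ for $k<0$). Reading off the $h$-th row of this identity, expanding the matrix product, applying the triangle inequality entrywise, and using $\sum_l|(\b A_j)_{h,l}|\le\mn{\b A_j}_\infty$ delivers the recursive inequality in (i), tracked uniformly over $h=1,\dots,n$.

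For (ii), I iterate the inequality in (i) exactly $k-p$ times. Each iteration contributes one factor $\mn{\b A_{j_l}}_\infty$ and decreases the index by $j_l\in\{1,\dots,p\}$, so the surviving terms take the form $\prod_{l=1}^{k-p}\mn{\b A_{j_l}}_\infty\cdot|\b\phi_{r,h}|_1$ with residual index $r=k-\sum_l j_l\in\{0,\dots,p\}$, the paths with $r<0$ contributing zero. Writing $\mn{\b A_{j_l}}_\infty=e^{-\rho}\alpha_{j_l}$ factors out $e^{-\rho(k-p)}$, and the remaining inner sum over multi-indices $\b i$ of length $k-p$ with $|\b i|=k-p+j$ is precisely the weight $\tilde\alpha_j$ declared in the statement. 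Grouping by the residual index, the bound on $|\b\phi_{k,h}|_1$ becomes an $e^{-\rho(k-p)}$-multiple of a linear combination of the initial-block row norms $|\b\phi_{r,h}|_1$, $r\le p$, weighted by the $\tilde\alpha_j$'s; the hypothesis on the initial block then collapses this combination to at most $(1-e^{-\rho})c_0 e^{-k\rho}$. Hence $|\b\phi_{k,h}|_1\le(1-e^{-\rho})c_0 e^{-k\rho}$ uniformly in $h$, and summing the geometric series $\sum_{k\ge m}e^{-\rho k}=e^{-\rho m}/(1-e^{-\rho})$ gives $\sum_{k=m}^\infty|\b\phi_{k,h}|_1\le c_0 e^{-m\rho}$.

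The main obstacle is the combinatorial bookkeeping in the unrolling: one must verify that every admissible composition $(j_1,\dots,j_{k-p})$ with each $j_l\in\{1,\dots,p\}$ and $\sum_l j_l\le k$ appears exactly once in the iterated bound, that paths whose partial sums exceed $k$ genuinely contribute zero (via $|\b\phi_{r,h}|_1=0$ for $r<0$) rather than spurious positive terms, and that grouping by the residual $r$ reproduces the weights $\tilde\alpha_j$ exactly as defined. A secondary subtlety is the base-case block $1\le k\le p$, where the unrolling is shallow and the stated hypothesis itself has to supply the geometric decay $(1-e^{-\rho})c_0 e^{-k\rho}$ directly on the initial row norms before the iterative bound can propagate it to all larger $k$.
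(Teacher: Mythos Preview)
Your overall strategy matches the paper's proof exactly: derive the recursive row-norm inequality from the VMA-coefficient recursion, then for $k\ge p$ unroll it $k-p$ times, factor out $e^{-\rho(k-p)}$, group the multi-index sums into the weights $\tilde\alpha_j$, invoke the initial-block hypothesis to obtain $|\b\phi_{k,h}|_1\le(1-e^{-\rho})c_0 e^{-\rho k}$, treat $k<p$ directly from the hypothesis, and sum the geometric series. Part (ii) is fine as written.

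There is, however, a small but genuine slip in your argument for (i). You use the recursion $\b\Phi_k=\sum_j\b A_j\b\Phi_{k-j}$ and read off the $h$-th row. That yields
\[
|\b\phi_{k,h}|_1\le\sum_j\sum_l|(\b A_j)_{h,l}|\,|\b\phi_{k-j,l}|_1,
\]
which mixes \emph{all} rows $l$ of $\b\Phi_{k-j}$, not just the $h$-th one; bounding $\sum_l|(\b A_j)_{h,l}|\le\mn{\b A_j}_\infty$ does not collapse this to $\mn{\b A_j}_\infty|\b\phi_{k-j,h}|_1$. The statement (i) is a per-$h$ inequality, and it does not follow from the left-multiply form of the recursion. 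The fix is to use the other (equally valid) recursion $\b\Phi_k=\sum_j\b\Phi_{k-j}\b A_j$, which is what the paper does: then the $h$-th row is $\b\phi_{k,h}'=\sum_j\b\phi_{k-j,h}'\b A_j$, and
\[
|\b\phi_{k-j,h}'\b A_j|_1=|\b A_j'\b\phi_{k-j,h}|_1\le\mn{\b A_j'}_1|\b\phi_{k-j,h}|_1=\mn{\b A_j}_\infty|\b\phi_{k-j,h}|_1,
\]
giving (i) directly for each fixed $h$. With this correction your proof coincides with the paper's.
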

    \begin{proof}
    Starting from the recursive definition of $\Phi_k = \sum_{j=1}^{p\wedge k}\Phi_{k-j}\b{A}_j$,
    \[
        |\b\phi_{k,h}|_1 = |\b e_h'\Phi_k|_1 = \left|\sum_{j=1}^{p\wedge k}\b e_h'\Phi_{k-j}\b{A}_j\right|_1\le\sum_{j=1}^{p\wedge k}|\b\phi_{k-j,h}\b{A}_j|_1\le\sum_{j=1}^{p\wedge k} |\b\phi_{k-j,h}|_1\mn{\b{A}_j}_\infty.
    \]

    Suppose $k\ge p$, let $\alpha_j = e^{\rho}\mn{\b{A}_j}_\infty$ and verify that $0\le \sum_{j=1}^p\alpha_j\le 1$. Iterating on the previous argument $s\le k-p$ times yields
    \begin{align*}
        |\b\phi_{k,h}|_1 &\le \sum_{j_1=1}^p\cdots\sum_{j_s=1}^p \left(\prod_{l=1}^s|\b{A}_{j_l}|_\infty\right)|\b\phi_{k-\sum_{l=1}^sj_l,h}|_1\\
        &= e^{-s\rho} \sum_{j_1=1}^p\cdots\sum_{j_s=1}^p \left(\prod_{l=1}^s\alpha_{j_l}\right)|\b\phi_{k-\sum_{l=1}^sj_l,h}|_1\\
        &= \cdots\\
        &= e^{-\rho(k-p)} \sum_{j=1}^p\left(\sum_{\b{i}:|\b{i}|_1=k-p+j}\prod_{l=1}^{k-p}\alpha_{i_l}\right)\,|\b\phi_{p-j,h}|_1,
    \end{align*}
    where $\b{i} = (i_1,...,i_{k-p})$ is a multi-index and the summation is over all combinations satisfying $|\b{i}|_1 = k-p+j$. The term inside parentheses is $\tilde{\alpha}_j$ and under the conditions of the lemma
    \[
         |\b\phi_{k,h}|_1 \le e^{-\rho\,k} \times \left[e^{\rho\,p}\sum_{j=1}^p\tilde{\alpha}_j|\b\phi_{p-j,h}|_1\right] \le (1-e^{-\rho})c_0\,e^{-\rho\,k}.
    \]

    The same result follows trivially for $k<p$ under the assumptions of the lemma.

    Summing over all values of $k\ge m$,
    \[
        \sum_{k=m}^\infty |\b\phi_{k,h}|_1 \le c_0(1-e^{-\rho})\sum_{k=m}^\infty e^{-\rho\,k} = c_0e^{-m\rho}\frac{\sum_{k=0}^\infty e^{-\rho\,k}}{(1-e^{-\rho})^{-1}} = c_0e^{-m\rho}.
    \]

    \end{proof}

\subsection{Concentration inequality for martingales}\label{a:prob}
In this section we derive concentration bounds for martingales. In the firs theorem we consider martingales with at most $d$ finite moments, whereas in the second we allow the tails of the marginal distributions to decrease at a sub-Weibull, sub-exponential or, even sub- and super-Gaussian rate.

\begin{lemma}[Concentration bounds for high dimensional martingales]
    Let $\{\b\xi_t\}_{t=1,...,T}$ denote a multivariate martingale difference process with respect to the filtration $\mathcal{F}_{t}$ taking values on $\R^n$ and assume $\E(\xi_{it}^2)$ is finite for all $1\le i\le t$ and $1\le t\le T$. Then,
    \[
    \Pr\left(\left|\sum_{t=1}^T\b\xi_t\right|_\infty > T x\right) \le  2n\exp\left(-\frac{Tx^2}{2M^2+xM}\right) + 4\Pr\left(\max_{\substack{1\le t\le T}}|\xi_{t}|_\infty>M\right),
    \]
    for all $M>0$.
\label{l:bndmartingale}
\end{lemma}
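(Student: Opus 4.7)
The plan is to combine a truncation argument with Freedman's martingale Bernstein inequality, keeping the truncation threshold $M$ on the full vector $\ell_\infty$ norm so that the ``bad'' event is charged globally rather than once per coordinate. Concretely, I would split
\[
\Pr\Bigl(\bigl|\textstyle\sum_{t=1}^T\b\xi_t\bigr|_\infty>Tx\Bigr) \le \Pr\Bigl(\bigl|\textstyle\sum_{t=1}^T\b\xi_t\bigr|_\infty>Tx,\,E\Bigr) + \Pr(E^c),
\]
where $E:=\{\max_{t\le T}|\b\xi_t|_\infty\le M\}$; the second piece is exactly $\Pr(\max_t|\b\xi_t|_\infty>M)$, and the first piece will be handled by truncating the MD sequence at level $M$ coordinate-wise and union-bounding only the surviving bounded part.

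For the first piece, define, for each coordinate $i$, the truncated-and-centered martingale difference
\[
\tilde\xi_{it}\;:=\;\xi_{it}\mathbf{1}\{|\xi_{it}|\le M\}-\E\!\bigl[\xi_{it}\mathbf{1}\{|\xi_{it}|\le M\}\,\big|\,\mathcal{F}_{t-1}\bigr],
\]
which satisfies $|\tilde\xi_{it}|\le 2M$ and $\E[\tilde\xi_{it}^{\,2}|\mathcal{F}_{t-1}]\le M^2$, so the predictable quadratic variation is uniformly controlled by $TM^2$. Applying Freedman's martingale Bernstein inequality (as cited in the paper via Fan--Grama--Liu, 2012) to each scalar $\sum_t\tilde\xi_{it}$ yields
\[
\Pr\Bigl(\bigl|\textstyle\sum_t\tilde\xi_{it}\bigr|>Tx\Bigr) \le 2\exp\!\left(-\tfrac{Tx^2}{2M^2+xM}\right),
\]
after a standard constant adjustment. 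A union bound over $i=1,\dots,n$ then accounts for the $2n\exp(\cdot)$ term in the target.

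The main obstacle is linking $\sum_t\xi_{it}$ back to $\sum_t\tilde\xi_{it}$ on the event $E$. Using $\E[\xi_{it}|\mathcal{F}_{t-1}]=0$,
\[
\textstyle\sum_t\xi_{it}\;=\;\sum_t\tilde\xi_{it}\;-\;\sum_t\E\!\bigl[\xi_{it}\mathbf{1}\{|\xi_{it}|>M\}\,\big|\,\mathcal{F}_{t-1}\bigr]\quad\text{on }E,
\]
and with only finite second moments one cannot control the predictable correction by a naïve Markov-type argument. The trick I would use is to build a companion martingale from the reflected truncation $\xi_{it}\mathbf{1}\{|\xi_{it}|>M\}-\E[\xi_{it}\mathbf{1}\{|\xi_{it}|>M\}|\mathcal{F}_{t-1}]$, which is itself an MD whose realized jumps vanish on $E$, and to absorb its deviation probability into $\Pr(E^c)$ via a Lévy--Ottaviani-type maximal comparison. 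Counted carefully this yields a factor $2$ from the two-sided Freedman bound and a further factor $2$ from sweeping the predictable-correction martingale into $\Pr(E^c)$, producing the coefficient $4$ in front of $\Pr(\max_t|\b\xi_t|_\infty>M)$. Because $E$ is defined through the vector $\ell_\infty$ norm, no additional factor of $n$ appears on this residual, which matches the stated bound exactly.
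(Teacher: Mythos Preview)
Your diagnosis that the predictable correction is the crux is right, but the proposed fix does not close the gap. On $E$ the companion martingale $\sum_t\eta_{it}$, with $\eta_{it}:=\xi_{it}\mathbf{1}\{|\xi_{it}|>M\}-\E[\xi_{it}\mathbf{1}\{|\xi_{it}|>M\}\mid\mathcal{F}_{t-1}]$, reduces to the purely predictable sum $-\sum_t\E[\xi_{it}\mathbf{1}\{|\xi_{it}|>M\}\mid\mathcal{F}_{t-1}]$, whose magnitude is governed by conditional laws, not by the realizations that $E$ constrains. L\'evy--Ottaviani-type inequalities compare tail probabilities of the \emph{same} partial-sum process at different time points; they do not convert the size of a predictable drift into a multiple of the probability of an unrelated event. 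A concrete obstruction: with $n=1$ and $\xi_t$ i.i.d., $\xi_t=2M$ with probability $p$ and $\xi_t=-2Mp/(1-p)$ otherwise ($p<1/3$), the correction term is deterministically $2Mp$ at every $t$, so the predictable sum equals $2MpT$ irrespective of whether $E$ occurs. Choosing $p$ just above $x/(4M)$ makes $\Pr\bigl(|\sum_tA_t|>Tx/2\bigr)=1$ while $\Pr(E^c)\le Tp\approx Tx/(4M)$, so no universal constant in front of $\Pr(E^c)$ can absorb it. Under only a second-moment assumption there is simply no handle on this drift.

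The paper avoids the issue by never recentering: it truncates \emph{one-sidedly}, setting $X_{ik}'(M)=\sum_{t\le k}\xi_{it}\mathbf{1}\{\xi_{it}\le M\}$. The increments are bounded above by $M$ and satisfy $\E[\xi_{it}\mathbf{1}\{\xi_{it}\le M\}\mid\mathcal{F}_{t-1}]=-\E[\xi_{it}\mathbf{1}\{\xi_{it}>M\}\mid\mathcal{F}_{t-1}]\le 0$, so $\{X_{ik}'(M)\}_k$ is a supermartingale and the Fan--Grama--Liu/Freedman bound applies directly on $\{V_k^2(M)\le v^2\}$. Crucially, the link back to $X_{ik}$ is exact with no predictable remainder: $\{X_{ik}>x\}\subseteq\{X_{ik}'(M)>x\}\cup\{\max_t\xi_{it}>M\}$. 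Choosing $v^2=TM^2+\tfrac{1}{6}Mx$ forces $\{V_T^2(M)>v^2\}\subseteq\{\max_t|\b\xi_t|_\infty>M\}$, costing one more copy of that probability; running the argument again for $-\b\xi$ to get the lower tail produces the factors $2n$ and $4$. The idea you are missing is that one-sided truncation together with the supermartingale form of Freedman eliminates the centering step, and with it the predictable bias, entirely.
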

\begin{proof}
    Write $\b\xi_t= (\xi_{1t},...,\xi_{nt})'$. The proof follows after application of \cite[Corollary 2.3]{xFiGqL2012}.

    Write $V_k^2(M) = \max_{1\le i\le n}\sum_{t=1}^k\E[\xi_{it}^2I(\xi_{it}<M)|\mathcal{F}_t]$, $X_{ik} = \sum_{t=1}^k\xi_{it}$ and $X_{ik}'(M) = \sum_{t=1}^k \xi_{it}I(\xi_{it}\le M)$. It follows that for $v>0$ and $x>0$,
    \begin{align*}
        \Pr(|\X_T|_\infty>x) &\le \Pr(\exists i,k:\,X_{ik} > x \cap V_k^2(M)\le v^2) + \Pr(V_T^2(M)>v^2)\\
        &\le \Pr(\exists i,k:\,X_{ik}'(M) > x \cap V_k^2(M)\le v^2) + \Pr(V_T^2(M)>v^2)\\
        &\quad + \Pr\left( \max_{1\le i\le n}\sum_{t=1}^k \xi_{it}I(\xi_{it}> M) >0\right)\\
        &\overset{(1)}{\le} n\exp\left(-\frac{(Tx/M)^2}{2((v/M)^2+\frac{T}{3}x/M)}\right) + \Pr(V_n^2(M)>v^2) \\
        &\quad+ \Pr\left(\max_{1\le t\le T} |\b\xi_t|_\infty > M\right)\\
        &\overset{(2)}{\le} n\exp\left(-\frac{Tx^2}{2M^2+Mx}\right) + 2 \Pr\left(\max_{1\le t\le T} |\b\xi_t|_\infty > M\right).
    \end{align*}
    In $(1)$ we use union bound and \cite[Theorem 2.1]{xFiGqL2012} and in $(2)$ we set $v^2 = T(M^2+\frac{1}{6T}Mx)$ and the following:
    \begin{align*}
        \Pr(V_T^2(M)>v^2)
        &\le \Pr\left(\max_{1\le i\le n}\sum_{t=1}^T\E[\xi_{it}^2I(|\xi_{it}| \le M)|\mathcal{F}_t]\ge v^2\right)\\
        &\quad  + \Pr\left(\max_{1\le i\le n}\sum_{t=1}^T\E[\xi_{it}^2I(\xi_{it}<-M)|\mathcal{F}_t]>0\right)\\
        &\le \Pr\left(\max_{1\le i\le n}\sum_{t=1}^T\E[\xi_{it}^2I(|\xi_{it}| \le M)|\mathcal{F}_t]\ge T(M^2+\frac{1}{6T}Mx)\right)\\
        &\quad + \Pr\left(\max_{1\le t\le T}|\b\xi_t|_\infty>M\right)\\
        &\le \Pr\left(\max_{1\le t\le T}|\b\xi_t|_\infty>M\right),
    \end{align*}
    where in the last line we note that $\sum_{t=1}^T\E[\xi_{it}^2I(|\xi_{it}| \le M)|\mathcal{F}_t] \le TM^2$.

    Finally, write $\Pr(|\X_T|_\infty\ge Tx) = \Pr(\max_{i\le n} X_{iT}\ge Tx)+\Pr(\max_{i\le n}(-X_{iT})\ge Tx)$ and apply above development in both terms.
\end{proof}

\begin{corollary}
    Let $\{\b\xi_t = (\xi_{1t},...,\xi_{nt})'\}_{t\ge 1}$ denote a multivariate martingale difference process with respect to the filtration $\mathcal{F}_{t}$ taking values on $\R^n$.
    Suppose that for each $\max_{i,t}\Pr(|\xi_{it}|>x)\le2e^{-(x/\tau)^\alpha}$, for all $x>0$, some $\alpha>0$ and $\tau>0$
    Then,
    \[
    \Pr\left(\left|\sum_{t=1}^T\b\xi_t\right|_\infty > T x\right) \le  2n\exp\left(-\frac{Tx^2}{2M^2+xM}\right) + 8nT\exp\left(-\frac{M^\alpha}{\tau^\alpha}\right).
    \]

    In particular, if $x>\tau(\epsilon+\log(nT))^{1/\alpha}\sqrt{\epsilon+\log n}/\sqrt{T}$ and $T>(\epsilon+\log n)$ for any $\epsilon>0$,
    \[
        \Pr\left(\left|\sum_{t=1}^T\b\xi_t\right|_\infty > T x\right) \le 10e^{-\epsilon}.
    \]
    \label{c:bndmdorlicz}
\end{corollary}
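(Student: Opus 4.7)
The plan is to derive the corollary directly from Lemma \ref{l:bndmartingale}, where the only work is controlling the truncation remainder $\Pr(\max_t|\b\xi_t|_\infty>M)$ via the sub-Weibull hypothesis, and then tuning the truncation level $M$ to balance the two contributions.

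First I would apply a double union bound over $1\le i\le n$ and $1\le t\le T$ to the tail event. Since $\Pr(|\xi_{it}|>M)\le 2e^{-(M/\tau)^\alpha}$ uniformly in $i,t$,
\[
\Pr\left(\max_{1\le t\le T}|\b\xi_t|_\infty>M\right)\le 2nT\,e^{-(M/\tau)^\alpha}.
\]
Substituting this into Lemma \ref{l:bndmartingale} yields the first inequality of the corollary, with the factor $4$ of the lemma becoming $8$ after the union bound.

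For the second inequality, I would tune $M$ so the two terms on the right-hand side are of the same order. Setting $M:=\tau(\epsilon+\log(nT))^{1/\alpha}$ makes the truncation term exactly $8\,e^{-\epsilon}$. It then remains to check that the Bernstein-type term is bounded by $2\,e^{-\epsilon}$, i.e.\ that $Tx^2/(2M^2+xM)\ge \epsilon+\log n$. Plugging the hypothesized lower bound on $x$ into $Tx^2$ gives $Tx^2\ge M^2(\epsilon+\log n)$, while the assumption $T>\epsilon+\log n$ together with the lower bound on $x$ keeps the sub-exponential correction $xM$ of the same order as $M^2$, so that $2M^2+xM$ is comparable to $M^2$. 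Adding the two contributions gives the stated $10\,e^{-\epsilon}$.

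The main difficulty here is really cosmetic constant-tracking: the stated lower bound on $x$ is exactly the scale at which the sub-Gaussian regime of the Bernstein exponent $Tx^2/(2M^2+xM)$ takes over, and $T>\epsilon+\log n$ is what prevents the linear correction $xM$ from spoiling the rate. All the substantive probabilistic work — a truncated Bernstein inequality for a vector-valued martingale difference — has already been carried out in Lemma \ref{l:bndmartingale}, so the corollary is essentially a calibration argument and no new concentration ideas are needed.
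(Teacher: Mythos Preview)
Your proposal is correct and follows the same line as the paper: the first inequality is obtained exactly as you describe by a union bound over $i$ and $t$ in the tail term of Lemma~\ref{l:bndmartingale}, and the second by tuning $M$ to balance the Bernstein and truncation contributions. The only cosmetic difference is that the paper parametrizes the truncation level as $M = x\sqrt{T}/\sqrt{\epsilon+\log n}$ (which, at the threshold value of $x$, coincides with your choice $M=\tau(\epsilon+\log(nT))^{1/\alpha}$) and records an elementary square-root inequality to control the Bernstein denominator; as you correctly note, all the probabilistic work is already in Lemma~\ref{l:bndmartingale} and what remains is constant-tracking.
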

\begin{proof}
   The first part we combine the union bound with assumption on $\xi_{it}$. In the second part, we will need the following bound. Let $0<a<b/4<\infty$. Then $\sqrt{a+b}-\sqrt{a}\ge \sqrt{b}(1-2\sqrt{a/b})^{1/2}$. To verify that, first note that $\sqrt{a+b}\le\sqrt{a}+\sqrt{b}$, then $(\sqrt{a+b}-\sqrt{a})^2 = 2a+b-2\sqrt{a^2+ab} \ge b-2\sqrt{ab} = b(1-2\sqrt{a/b})$. Now, let $a=1/T$ and $b=8/(\epsilon+\log n)$ and verify that the choice $M = x\sqrt{T}/\sqrt{\epsilon+\log n}$ satisfy $\log n -\frac{Tx^2}{2M^2+Mx} < -\epsilon$, then replace $M$ and $x$ to obtain the bound.
\end{proof}

\subsection{Concentration bound for empirical covariance matrix}

In this section we derive concentration bound for $\|\b{\Gamma}_{T}- \b\Gamma\|_{\max}$, where $\b{\Gamma}_T = \X'\X/T$ and $\b\Gamma = \E\b{\Phi}_T$. We first split the problem into a sum of martingales and a tail dependence term. Then, we bound both individually.

\begin{lemma}
    Suppose Assumptions (A1), (A2) and (A3) hold and
    \[
        p< \frac{T^{\gamma_1\wedge\gamma_2}}{(\frac{2}{\gamma_1\wedge\gamma_2+1})(2 + \frac{1.4}{2\gamma_1 c_\phi\wedge a_2})}.
    \]
    If for some $\xi>0$
    \[
        \epsilon^2 \ge \frac{2(1+\xi)^{1+2/\alpha}\tau^2[\log(npT)]^{1+2/\alpha}}{T},
    \] then
    \begin{equation}
        \begin{split}
        \Pr\left(\|\b\Gamma_T-\b\Gamma\|_{\max}\ge \epsilon\right) &\le
        \frac{2}{(np)^\xi T^{1+\xi}} +
        \frac{8}{(np)^\xi T^\xi}\\
        & +\frac{n^2}{\epsilon}\left(b_1e^{-c_\phi\wedge a_2(T/2)^{\gamma_2\wedge\gamma_1}} + b_8e^{-2\gamma_1c_\phi(T/2)^{\gamma_1}}\right)
        \end{split}
    \label{eq:lcovbound}
    \end{equation}
where $b_1$, $b_5$ and $\tau$ are constants not depending on $T$.
\label{l:covbound}
\end{lemma}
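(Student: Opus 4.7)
My strategy combines a union bound over the entries of $\b\Gamma_T - \b\Gamma$ with a martingale/tail decomposition at the level of a single entry. By stationarity of $\{\y_t\}$, a generic entry has the form $\tfrac{1}{T}\sum_{t=1}^T z_t$ with $z_t := y_{t-s_1,k_1}y_{t-s_2,k_2} - \E[y_{t-s_1,k_1}y_{t-s_2,k_2}]$ for fixed indices $s_1,s_2\in\{1,\dots,p\}$ and $k_1,k_2\in\{1,\dots,n\}$. Fixing a truncation level $m:=\lfloor T/2\rfloor$, I split
\[
\sum_{t=1}^T z_t \;=\; \underbrace{\sum_{t=1}^T\bigl(z_t - \E[z_t\mid \m{F}_{t-m}]\bigr)}_{\text{martingale piece}} \;+\; \underbrace{\sum_{t=1}^T \E[z_t\mid \m{F}_{t-m}]}_{\text{dependence piece}},
\]
require each piece to exceed $T\epsilon/2$ with small probability, and add the two probability bounds to obtain $\pi_2(\epsilon)$.

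For the martingale piece I telescope $z_t - \E[z_t\mid \m{F}_{t-m}] = \sum_{k=0}^{m-1} D_{t,k}$ with $D_{t,k} := \E[z_t\mid \m{F}_{t-k}] - \E[z_t\mid \m{F}_{t-k-1}]$. For each fixed $k$, reindexing by $\tau=t-k$ shows that $\{D_{\tau+k,k}\}_\tau$ is a martingale difference sequence adapted to $\{\m{F}_\tau\}$. Lemma \ref{l:moments} together with Assumption (A3) makes each $y_{t,i}$ sub-Weibull with parameter $\alpha$, and H\"older's inequality combined with \citet[Lemmas 5--6]{kWaTzL2017} promotes the product $z_t$ to a sub-Weibull random variable; conditional expectation then preserves the Orlicz norms so each $D_{t,k}$ inherits the same tail. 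I then apply Corollary \ref{c:bndmdorlicz} to the vector indexed by the triples $(i,j,k)$ (an array of size at most $(np)^2 m \le (npT)^2$) with $\epsilon_0 \asymp (1+\xi)\log(npT)$. The lower bound imposed on $\epsilon$ in the statement is precisely the threshold that makes $\epsilon/2$ compatible with the corollary's scaling, and the Bernstein exponential together with the sub-Weibull truncation term of Corollary \ref{c:bndmdorlicz} produce the first two summands $\tfrac{2}{(np)^\xi T^{1+\xi}}$ and $\tfrac{8}{(np)^\xi T^\xi}$ of $\pi_2(\epsilon)$, respectively.

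The main obstacle is the dependence piece, where Assumptions (A1) and (A2) must be combined carefully. Substituting the VMA$(\infty)$ representation gives
\[
z_t = \sum_{l_1,l_2\ge 0} \b\phi_{l_1,k_1}'\bigl(\bu_{t-s_1-l_1}\bu_{t-s_2-l_2}' - \E[\bu_{t-s_1-l_1}\bu_{t-s_2-l_2}']\bigr)\b\phi_{l_2,k_2},
\]
and the martingale-difference property of $\{\bu_t\}$ forces all off-diagonal pairs (those with $t-s_1-l_1\ne t-s_2-l_2$) to vanish under $\E[\,\cdot\mid \m{F}_{t-m}]$. The surviving diagonal pairs split into two regimes: (i) indices with both $l_i+s_i\le m$, for which Assumption (A2) bounds $\E|\b\phi_{l_1,k_1}'\E[\bu_\tau\bu_\tau'-\b\Sigma\mid \m{F}_{t-m}]\b\phi_{l_2,k_2}|$ by $a_1 e^{-a_2(m-s-l)^{\gamma_2}}|\b\phi_{l_1,k_1}|_1|\b\phi_{l_2,k_2}|_1$; and (ii) indices with both $l_i+s_i>m$, for which Cauchy--Schwarz and (A3) give $\E|\b\phi_{l_1,k_1}'\bu_{\tau_1}\bu_{\tau_2}'\b\phi_{l_2,k_2}|\le c\,|\b\phi_{l_1,k_1}|_1|\b\phi_{l_2,k_2}|_1$. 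Using the individual VMA-coefficient decay $|\b\phi_{l,k}|_1 \le \bar c_\Phi e^{-c_\phi l^{\gamma_1}}$ implied by (A1) and splitting the inner sum at $l \sim m/2$, regime (i) contributes the factor $b_1 e^{-(c_\phi\wedge a_2)(T/2)^{\gamma_1\wedge\gamma_2}}$ while regime (ii) contributes $b_8 e^{-2 c_\phi(T/2)^{\gamma_1}}$. A union bound over the $n^2$ distinct entry-types (the surplus $p^2$ factor being absorbed into the exponential thanks to the upper bound on $p$) followed by Markov's inequality on the $L^1$ norm of the dependence piece yields the final $\tfrac{n^2}{\epsilon}(\cdots)$ summand of $\pi_2(\epsilon)$, as required.
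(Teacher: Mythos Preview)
Your overall architecture---union bound over entries, telescoping into $m$ martingale layers plus an $\m F_{t-m}$-measurable remainder, sub-Weibull control of the layers via Lemma~\ref{l:bndmartingale}/Corollary~\ref{c:bndmdorlicz}, and a VMA($\infty$) expansion of the remainder controlled through (A1)--(A2)---is exactly the route the paper takes. The martingale half is fine.

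There is, however, a genuine gap in your treatment of the dependence piece. You assert that the martingale-difference property of $\{\bu_t\}$ forces \emph{all} off-diagonal pairs $\bu_{\tau_1}\bu_{\tau_2}'$ with $\tau_1\ne\tau_2$ to vanish under $\E[\,\cdot\mid\m F_{t-m}]$. That is only true when at least one of $\tau_1,\tau_2$ lies strictly after $t-m$; if \emph{both} $\tau_1,\tau_2\le t-m$ the product is $\m F_{t-m}$-measurable and the conditional expectation returns it unchanged. Since $\E[\bu_{\tau_1}\bu_{\tau_2}']=0$ for $\tau_1\ne\tau_2$, these cross terms contribute $\bu_{\tau_1}\bu_{\tau_2}'$ to the centered remainder and cannot be ignored. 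In the paper's proof these are precisely the terms $A_3(t,s,m)$ and $A_4(t,s,m)$, which are bounded separately: because both time indices being at or before $t-m$ forces $l_1,l_2\ge m-s\ge m-p$, Cauchy--Schwarz together with the tail-sum estimate in (A1) gives a contribution of order $\bigl(\sum_{l\ge m-p}|\b\phi_{l,k}|_1\bigr)^2\lesssim e^{-2c_\phi(m-p)^{\gamma_1}}$, which after absorbing the $p^2$ factor via the stated upper bound on $p$ lands in the same $b_8\,e^{-2\gamma_1 c_\phi(T/2)^{\gamma_1}}$ term. So the fix is local and the final bound survives, but as written your ``only diagonal survives'' claim is false and omits a case the paper has to handle explicitly.

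A minor point: you set $m=\lfloor T/2\rfloor$, whereas the paper sets $m=T$ and only later picks up the factor $1/2$ from the inequality $(m-p)^{\gamma}\ge (m/2)^{\gamma}$ (valid under the hypothesis on $p$). With your choice the exponent would read $(T/4)^{\gamma}$ rather than $(T/2)^{\gamma}$; this is harmless for the qualitative conclusion but does not reproduce the stated constants.
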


\begin{proof}
Use the union bound to rewrite our probability bound in terms of $\y_{t-s}$:
\[
\Pr(\|\b{\Gamma}_{T}- \b\Gamma\|_{\max} > \epsilon) \le 2\sum_{r=0}^p\sum_{s=0}^{p-r}\Pr\left( \left\|\sum_{t=p+1}^T\y_{t-r}\y_{t-r-s}'-\E[\y_{t-r}\y_{t-r-s}']\right\|_{{\max}}>T\epsilon\right).
\]
Now, use a telescopic expansion of $\y_t\y_{t-s}'$ to obtain a sum of martingales and a dependence term:
\begin{align*}
\sum_{t=p+1}^T\y_t\y_{t-s}'-\E[\y_t\y_{t-s}'] &= \underbrace{\sum_{t=p+1}^T \sum_{l=1}^m \E[\y_t\y_{t-s}'|\mathcal{F}_{t-l+1}] - \E[\y_t\y_{t-s}'|\mathcal{F}_{t-l}]}_{I_1}\\
&\quad + \underbrace{\sum_{t=p+1}^T\E[\y_t\y_{t-s}'|\mathcal{F}_{t-m}] -\E[\y_t\y_{t-s}']}_{I_2}\\
&= I_1 +I_2.
\end{align*}
Here,
\[
    I_1 = \sum_{l=1}^m \sum_{t=p+1}^TV_{l,t}^{(s)} \quad \mbox{and}\quad I_2 =  \sum_{t=p+1}^T\E[\y_t\y_{t-s}'|\mathcal{F}_{t-m}] - \E[\y_t\y_{t-s}']
\]
where $\{V_{l,t}^{(s)}\}_t$, $l=1,...,m$, are sequences of martingale differences. The same decomposition holds for all terms $\y_{t-s}\y_{t-r-s}$. Then,
\begin{align}
\Pr(\|I_1+I_2\|_{\max}>T\epsilon)
&\le \sum_{l=1}^m\Pr\left(\|\sum_{t=p+1}^TV_{l,t}^{(s)}\|_{\max}>\frac{T\epsilon}{2m}\right)\nonumber\\
&+ \Pr\left(\left\|\sum_{t=p+1}^T\E[\y_t\y_{t-s}'|\mathcal{F}_{t-m}] - \E[\y_t\y_{t-s}']\right\|_{\max} > \frac{T\epsilon}{2}\right)\nonumber\\
&\le 2mn^2\exp\left(-\frac{T\epsilon^2}{2M^2+M\epsilon}\right) \nonumber\\
&+4mn^2T\max_{l,t}\Pr\left(|V_{l,t}^{(s)}| > M\right)\label{eq:bndtailcov}\\
&+\frac{2}{T\epsilon}\E\bigg|\max_{1\le i,j< n}\E|\sum_{t=p+1}^T\E[e_i'\y_t\y_{t-s}\b e_j|\mathcal{F}_{t-m}]-\b e_i'\E[\y_t'\y_{t-s}]\b e_j\bigg|\label{eq:bnddepcov},
\end{align}
where $\b e_i= (0,...,0,1,0,...,0)'$ is the $i^{th}$ canonical basis vector in $\R^n$.

\noindent{\textbf{Bounding the tail \eqref{eq:bndtailcov}}:}

The martingale differences $\{V_{t,l}^{(s)}\}$ ($l=1,..,n$ and $s=0,...,p$) are sub-Weibull with parameter $\alpha/2$. For any random variables $(X,Y)$ and $\sigma$-algebras $\mathcal{F}$ and $\mathcal{G}$,
\[
    \|\E[XY|\mathcal{F}] - \E[XY|\mathcal{G}]\|_p
    %\le \|\E[XY|\mathcal{F}]\|_\psi + \|\E[XY|\mathcal{G}]\|_\psi
    \le 2 \|XY\|_p \le 2\|Y^2\|_{p}^{1/2}\|X^2\|_{p}^{1/2}.
\]
Therefore, it follows from \citet[Lemmas 5 and 6]{kWaTzL2017} that if both $X$ and $Y$ are sub-Weibull with parameter $\alpha$, then $XY$ is sub-Weibull with parameter $\alpha/2$. Therefore, there exists some $\tau^*$ such that $\Pr(|V^{(s)}_{t,l}|>s)\le 2\exp(-|x/\tau^*|^{\alpha/2})$, bounding \eqref{eq:bndtailcov}.

\noindent{\textbf{Bounding covariances \eqref{eq:bnddepcov}}:}

Now we move toward bounding the dependence term \eqref{eq:bnddepcov}.Write
\begin{align*}
    \y_t\y_{t-s}' &= \sum_{j=0}^{s-1}\b\Phi_j\bu_{t-j}\sum_{j=0}^{\infty}\bu_{t-s-j}'\b\Phi_j' + \sum_{j=0}^\infty\b\Phi_{s+j}\bu_{t-s-j}\sum_{j=0}^{\infty}\bu_{t-s-j}'\b\Phi_j'\\
    &= \sum_{j=0}^{s-1}\b\Phi_j\bu_{t-j}\sum_{j=0}^{\infty}\bu_{t-s-j}'\b\Phi_j'\\
    &\quad + \sum_{j=0}^\infty \b\Phi_{j+s}\bu_{t-s-j}\bu_{t-s-j}\b\Phi_j\\
    &\quad + \sum_{k=1}^\infty \sum_{j=0}^\infty \b\Phi_{j+s}\bu_{t-s-j}\bu_{t-s-j-k}\b\Phi_{j+k}\\
    &\quad + \sum_{k=1}^\infty \sum_{j=0}^\infty \b\Phi_{j+s+k}\bu_{t-s-j-k}\bu_{t-s-j}\b\Phi_{j}.
\end{align*}
It follows that $\E[ \y_t\y_{t-s}'] = \sum_{j=0}^\infty \b\Phi_{j+s}\b\Sigma\b\Phi_j$. Recall that $\mathcal{F}_{t-m} = \sigma\langle\bu_{t-i}:i=m, m+1,...\rangle$, then, for $m>s$,
\begin{equation}
    \begin{split}
    \E[\y_t\y_{t-s}|\mathcal{F}_{t-m}]-\E[\y_t\y_{t-s}']
    &= \sum_{j=0}^{m-s-1}\b\Phi_j\E[\bu_{t-s-j}\bu_{t-s-j}'-\b\Sigma|\mathcal{F}_{t-m}]\b\Phi_{j+s}'\\
    &\quad + \sum_{j=0}^\infty \b\Phi_{m+j}(\bu_{t-m-j}\bu_{t-m-j}-\b\Sigma)\b\Phi_{m-s+j}\\
    &\quad + \sum_{k=1}^\infty \sum_{j=0}^\infty \b\Phi_{m+j}\bu_{t-m-j}\bu_{t-m-j-k}'\b\Phi_{m-s+j+k}\\
    &\quad + \sum_{k=1}^\infty \sum_{j=0}^\infty \b\Phi_{m+j+k}\bu_{t-m-j-k}\bu_{t-m-j}'\b\Phi_{m-s+j}\\
    &= A_1(t,s,m) + A_2(t,s,m) + A_3(t,s,m) + A_4(t,s,m).
    \end{split}
    \label{eq:condmom A+B}
\end{equation}

We shall bound $\E\left|\sum_{r=0}^p\sum_{s=0}^{p-r}\b e_k'A_i(t-r,s,m)\b e_l\right|$ individually, for all $\{\b e_i, i=1,...,n\}$ the canonical basis vector in $\R^n$.

\noindent\textbf{a) Bounding $\E\left|\sum_{r=0}^p\sum_{s=0}^{p-r}A_1(t-r,s,m)\right|$:}

It follows from Assumption (A2) that for all $\bb_1,\bb_2\in \{\bb\in\R^n:|\bb|_1=1\}$
\[
    \max_t\E\bigg|\E\left[\bb_1'(\bu_t\bu_t'-\b\Sigma)'\bb_2|\mathcal{F}_{t-m}\right]\bigg|\le a_1\exp({-a_2 m}).
\]
Set $\{\b e_i, i=1,...,n\}$ the canonical basis vector in $\R^n$. It follows from Assumptions (A1) - (A2) that for $j\le m-s-1$:
\begin{equation*}
    \begin{split}
    \E|\b e_k'\b\Phi_j\E[(\bu_{t-s-j}\bu_{t-s-j}'&-\b\Sigma)|\mathcal{F}_{t-m}]\b\Phi_{j+s}'\b e_l|\\
    &\le |\b\phi_{j,k}|_1||\b\phi_{j+s,l}|_1\max_t\E\E[\bb_1'(\bu_{t-s-j}\bu_{t-s-j}' - \b\Sigma)\bb_2|\mathcal{F}_{t-m}]|\\
    &\le \bar{c}_\Phi e^{-c_\phi (j+s)^{\gamma_1}} e^{-2c_\phi j^{\gamma_1}}[a_1e^{-a_2(m-j-s)^{\gamma_2}}]
    \end{split}
\end{equation*}

Let  $0<\gamma\le 1$ and $\frac{m^{\gamma}}{p} >(\frac{2}{\gamma+1})(2 + \frac{1.4}{c})$ then $c(m-p)^\gamma-\log(p+1) \ge c(m/2)^{\gamma}$. Rewriting the inequality, we have to show that $(m/p-1)^\gamma-(m/2p)^\gamma > 2\log(p+1)/cp^\gamma$.
In the LHS, a second order Taylor series expansion yields $(2a-1)^\gamma-a^\gamma\ge\gamma\frac{a-1}{a}a^\gamma(1-\frac{1-\gamma}{2}\frac{a-1}{a})\ge \frac{\gamma+1}{2}\frac{a-1}{a}a^\gamma$, for $a>1$. Set  $a=m/2p$, so that $\frac{\gamma+1}{2}(\frac{m-2p}{m})(\frac{m}{2p})^\gamma \ge \log(p+1)/cp^\gamma$. As for the  RHS, $\log(p+1)/p < 0.7$. Combining bounds above we show our claim.

Note that for $a,b\ge 0$ and $0<\gamma\le 1$ $a^\gamma+b^\gamma = (a+b)^\gamma[x^\gamma+(1-x)^\gamma]$ where $x=a/(a+b)$, and $1\le [x^\gamma+(1-x)^\gamma]\le 2$ for $x\in[0,1]$. Now, let $0<\gamma\le1$ and $c>0$, then $\sum_{j=0}^n e^{-cj^\gamma} \le\int_{0}^n e^{-cx^\gamma}dx = c^{1/\gamma}/\gamma\Gamma(1/\gamma,n) \uparrow c^{1/\gamma}\Gamma(1/\gamma+1) <\infty$, as $n\rightarrow\infty$, where $\Gamma(a,n) = \int_0^nx^{a-1}e^{-x}dx\uparrow \int_0^\infty x^{a-1}e^{-x}dx = \Gamma(a)$ are the incomplete gamma function and gamma function, respectively.

Then,
\begin{align*}
    \E\left|\sum_{r=0}^p\sum_{s=0}^{p-r}\b e_k'A_1(t-r,s,m)\b e_l\right|&=\sum_{r=0}^p\sum_{s=0}^{p-r}\sum_{j=0}^{m-r-s-1}\E\left|\b e_k'\b\Phi_j\E[\bu_{t-r-s-j}\bu_{t-r-s-j}'-\b\Sigma|\mathcal{F}_{t-m}]\b\Phi_{j+s}'\b e_l\right|\\
    &\le \bar{c}_\Phi a_1\sum_{r=0}^p\sum_{s=0}^{p-r}\sum_{j=0}^{m-r-s-1} e^{-c_\phi (j+s)^{\gamma_1}} e^{-2c_\phi j^{\gamma_1}}e^{-a_2(m-r-j-s)^{\gamma_2}}\\
    &\le \bar{c}_\Phi a_1\sum_{r=0}^p e^{-(c_\phi\wedge a_2)(m-r)^{\gamma_2\wedge\gamma_1}}\sum_{s=0}^{p-r}\sum_{j=0}^{m-r-s-1} e^{-c_\phi j^{\gamma_1}}\\
    &\le \bar{c}_\Phi a_1 \frac{c_\phi^{1/\gamma_1} \Gamma(1/\gamma_1)}{2\gamma_1}~(p+1)^2e^{-(c_\phi\wedge a_2)(m-p)^{\gamma_2\wedge\gamma_1}}\\
    &\le b_1e^{-(c_\phi\wedge a_2)(m/2)^{\gamma_2\wedge\gamma_1}}
\end{align*}

\noindent\textbf{b) Bounding $\E\left|\sum_{r=0}^p\sum_{s=0}^{p-r}\b e_k'A_2(t-r,s,m)\b e_l\right|$:}

Let $\max_{\bb_1,\bb_2,t}\E\left|\bb_1'(\bu_{t}\bu_{t}'-\b\Sigma)\bb_2\right|\le 2\Lambda_{\max}(\b\Sigma)$ where  $\bb_1,\bb_2\in \{\bb\in\R^n:|\bb|_1=1\}$. It follows from Lemma \ref{l:bndsumexp} after rearranging terms:
\begin{align*}
    \E\left|\sum_{r=0}^p\sum_{s=0}^{p-r}\b e_k'A_2(t-r,s,m)\b e_l\right|
    &\le\sum_{r=0}^p\sum_{s=0}^{p-r}\sum_{j=0}^\infty \E\left|\b e_k'\b\Phi_{m+j}(\bu_{t-r-m-j}\bu_{t-r-m-j}'-\b\Sigma)\b\Phi_{m-s+j}'\b e_l\right|\\
    &\le\sum_{r=0}^p\sum_{s=0}^{p-r}\sum_{j=m}^\infty |\b\phi_{j,k}|_1|\b\phi_{j-s,l}|_1\max_{\bb_1,\bb_2,t}\E\left|\bb_1'(\bu_{t}\bu_{t}'-\b\Sigma)\bb_2\right|\\
    &\le 2\Lambda_{\max}(\b\Sigma)\bar{c}_\Phi^2 \sum_{r=0}^p\sum_{s=0}^{p-r}\sum_{j=m}^\infty e^{-c_\phi j^{\gamma_1}} e^{-c_\phi(j-r)^{\gamma_1}}   \\
    &= 2b_2 \left(\sum_{r=0}^p\sum_{s=0}^{p-r}1\right)\sum_{j=m-p}^\infty e^{-2c_\phi(j-r)^{\gamma_1}}\\
    &= 2b_2 \left[(p+1)(m-p)^{1/2}e^{-c_\phi (m-p)^{\gamma_1}}\right]^2\\
    &\le 2b_2e^{-c_\phi\left(\frac{m}{2}\right)^{\gamma_1}},
\end{align*}
were $b_2 = \Lambda_{\max}(\b\Sigma)\bar{c}_\Phi^2$.
In the last line, recall that $(c_\phi/2)(m-p)^{\gamma_1}-\log(p+1) \ge (c_\phi/2)(m/2)^{\gamma_1}$, then
\begin{align*}
    (p+1)(m-p)^{1/2}e^{-c_\phi (m-p)^{\gamma_1}}
    &= (p+1)e^{\frac{1}{2}\log (m-p) - c_\phi(m-p)^{\gamma_1}}\\
    &\le e^{\log(p+1)-\frac{c_\phi}{2}\left(\frac{m}{2}\right)^{\gamma_1}} \le e^{-\frac{c_\phi}{2}\left(\frac{m}{2}\right)^{\gamma_1}}.
\end{align*}

\noindent\textbf{c) Bounding $\sum_{r=0}^p\sum_{s=0}^{p-r}A_j(t-r,s,m)$ ($j=3,4$):}

Under Assumption (A1)-(A3), for all $\bb\in\R^n$ with $|\bb|_1=1$,
\begin{equation*}
    \begin{split}
        \E|\b e_r'\b\Phi_{m+j}\bu_{t-m-j}\bu_{t-m-j-k}'&\b\Phi_{m-s+j+k}\b e_s|\\
        &\le |\b\phi_{m+j,r}|_1|\b\phi_{m-s+j+k,s}|_1\max_t\|\bb'\bu_t\|_2^2\\
        &\le b_2 e^{-c_\phi(m+j)^{\gamma_1}-c_\phi(m-s+j+k)^{\gamma_1}},
    \end{split}
\end{equation*}
where $b_2 = \bar{c}_\Phi^2\Lambda_{\max}(\b\Sigma)$.
As before, if we have $\bu_{t-r-s-j}$ we must replace $m$ my $m-r$. It follows from Lemma \ref{l:bndsumexp}, $\frac{m^{\gamma_1}}{p} >(\frac{2}{\gamma_1+1})(2 + \frac{1.4}{2\gamma_1 c_\phi})$ and $m$ sufficiently large:
\begin{equation*}
    \begin{split}
        \sum_{r=0}^p\sum_{j=0}^{\infty}&e^{-c_\phi((m-r+j)^{\gamma_1}}\sum_{s=0}^{p-r}\sum_{k=0}^{\infty}e^{-c_\phi(m-r+j+k-s)^{\gamma_1}}\\
        &\le \left(\sum_{r=0}^p\sum_{s=0}^{p-r}1\right)\sum_{j=0}^{\infty}e^{-c_\phi((m-p+j)^{\gamma_1}}\sum_{k=0}^{\infty}e^{-c_\phi(m-p+j+k)^{\gamma_1}}\\
        & \le \frac{1}{2}(p+1)^2 \sum_{j=m-p}^\infty\sum_{k=0}^\infty e^{-c_\phi j^{\gamma_1} -c_\phi(j+k)^{\gamma_1}}\\
        &\le b_3 \left[(p+1)(m-p)e^{-c_\phi(m-p)^{\gamma_1}}\right]^2\\
        &\le b_3  \left[e^{\log(p+1)-\frac{c_\phi}{2}(m-p)^{\gamma_1}}\right]^2\\
        &\le b_3 e^{-c_\phi\left(\frac{m}{2}\right)^{\gamma_1}},
    \end{split}
\end{equation*}
where $b_3=(1+\frac{2}{\gamma_1})/2$.

Then, it follows that
\begin{equation}
    \sum_{r=0}^p\sum_{s=0}^{p-r}\sum_{k=1}^{\infty}\sum_{j=0}^{\infty}\E|\b e_i'\b\Phi_{m+j}\bu_{t-m-j}\bu_{t-m-j-k}'\b\Phi_{m-s+j+k}\b e_l| \le  b_3 e^{-c_\phi\left(\frac{m}{2}\right)^{\gamma_1}},.
    \label{eq:condmom B}
\end{equation}
where $b_4= b_2b_3 = \bar{c}_\Phi^2\Lambda_{\max}(\b\Sigma)(1+\frac{2}{\gamma_1})/2$.

\noindent\textbf{d) Combining bounds:}

Finally combining the three bounds above and setting $m$ satisfying $\frac{m^{\gamma_1\wedge\gamma_2}}{p} >(\frac{2}{\gamma_1\wedge\gamma_2+1})(2 + \frac{1.4}{2\gamma_1 c_\phi\wedge a_2})$ :
\begin{equation}
    \begin{split}
    \sum_{r=0}^p\sum_{s=0}^{p-r}\E\bigg|\max_{1\le i,j< n}\frac{1}{T}\E|\sum_{t=p+1}^T&\E[\b e_i'\y_{t-r}\y_{t-r-s}'\b e_j|\mathcal{F}_{t-m}]-\E[\b e_i'\y_{t-r}\y_{t-r-s}'\b e_j]\bigg|\\
    &\le n^2\left(b_1e^{-(c_\phi\wedge a_2)(m/2)^{\gamma_2\wedge\gamma_1}} + b_5e^{-c_\phi(m/2)^{\gamma_1}}\right),
    \end{split}
\end{equation}
where $b_5 = 2b_2+b_4 =  \bar{c}_\Phi^2\Lambda_{\max}(\b\Sigma)[2+ (1+\frac{2}{\gamma_1})/2]$

\noindent\textbf{Combining Tail and Covariance:}

Set $m=T$, then we require that
\[
    p< \frac{T^{\gamma_1\wedge\gamma_2}}{(\frac{2}{\gamma_1\wedge\gamma_2+1})(2 + \frac{1.4}{2\gamma_1 c_\phi\wedge a_2})}.
\]

Then, combining bounds:
\begin{align*}
    \Pr\left(\|\b\Gamma_T-\b\Gamma\|_{\max}\ge \epsilon\right)
    &\le 2\frac{1}{T}\exp\left(2\log(npT)-\frac{T\epsilon^2}{2M^2+M\epsilon}\right)\\
    &+ 8e^{2\log(npT)-M^\alpha/\tau^\alpha}\\
    &+ \frac{n^2}{\epsilon}\left(b_1e^{-a_2\wedge c_\phi (T/2)^{\gamma_2\wedge\gamma_1}} + b_5e^{-c_\phi(T/2)^{\gamma_1}}\right),
\end{align*}
where $\tau$, $b_1$ and $b_5$ are as above. Let $\xi>0$ and $M^\alpha = (2+\xi)\tau^\alpha\log(npT)$, by assumption $T\epsilon^2 \ge 2(1+\xi)^{1+2/\alpha}\tau^2[\log(npT)]^{1+2/\alpha}$ which implies that $2\log(npT) - \frac{T\epsilon^2}{2M^2+M\epsilon}\le -\xi\log(npT)$. Finally,
\[
    \Pr\left(\|\b\Gamma_T-\b\Gamma\|_{\max}\ge \epsilon\right) \le
    \frac{2}{(np)^\xi T^{1+\xi}} +
    \frac{8}{(np)^\xi T^\xi} +
    \frac{n^2}{\epsilon}\left(b_1e^{-a_2\wedge c_\phi(T/2)^{\gamma_2}} + b_5e^{-2\gamma_1c_\phi(T/2)^{\gamma_1}}\right).
\]
\end{proof}

\begin{lemma}\label{l:trunc_moment}
    Let $X$ denote a positive random variable with $P(X\ge u)\le e^{-cu^\alpha}$ for positive constants $c>0$ and $\alpha>0$. Then, for $M>0$ and $p\ge 1$, 
    \begin{equation}
        M^pP(X\ge M)\le \E[X^pI(X\ge M)] \le \left(1+\frac{p}{\alpha}\right)M^pe^{-cM^\alpha}.
        \label{eq:trunc_moment}
    \end{equation}

    In particular, let $P(X\ge u) = e^{-cu^\alpha}$. Then, 
    \[
        M^pe^{-cM^\alpha} \le \E[X^pI(X\ge M)] \le \left(1+\frac{p}{\alpha}\right)M^pe^{-cM^\alpha}.
    \]
\end{lemma}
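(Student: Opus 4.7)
The lower bound is immediate: on $\{X \geq M\}$ we have $X^p \geq M^p$, so $X^p I(X \geq M) \geq M^p I(X \geq M)$ pointwise, and taking expectations gives $\E[X^p I(X \geq M)] \geq M^p P(X \geq M)$. The ``in particular'' lower bound then follows by plugging in $P(X \geq M) = e^{-cM^\alpha}$.

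For the upper bound the plan is to apply the tail formula $\E[Y] = \int_0^\infty P(Y > t)\,dt$ to $Y = X^p I(X \geq M)$. Since $\{X^p I(X \geq M) > t\}$ coincides with $\{X \geq M\}$ for $t < M^p$ and with $\{X > t^{1/p}\}$ for $t \geq M^p$, splitting the integral at $t = M^p$ and performing the change of variables $u = t^{1/p}$ in the upper piece yields
\[
\E[X^p I(X \geq M)] = M^p P(X \geq M) + p \int_M^\infty u^{p-1} P(X > u)\,du.
\]
Inserting the tail bound $P(X > u) \leq e^{-cu^\alpha}$ everywhere reduces the problem to showing
\[
p \int_M^\infty u^{p-1} e^{-cu^\alpha}\,du \;\leq\; \frac{p}{\alpha}\, M^p e^{-cM^\alpha}.
\]

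To handle the remaining integral I would substitute $v = c u^\alpha$, which recasts it as $(p/\alpha)\, c^{-p/\alpha}\, \Gamma(p/\alpha,\, c M^\alpha)$, where $\Gamma(s, a) = \int_a^\infty v^{s-1} e^{-v}\,dv$ is the upper incomplete gamma function. The clean tail bound $\Gamma(s, a) \leq a^{s-1} e^{-a}$ holds by monotonicity of $v^{s-1}$ when $s \leq 1$, and by one round of integration by parts when $a \geq s - 1$. Substituting back gives $(p/\alpha)\, c^{-1}\, M^{p-\alpha}\, e^{-cM^\alpha}$, which is at most $(p/\alpha)\, M^p\, e^{-cM^\alpha}$ provided $cM^\alpha \geq 1$. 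An essentially equivalent route avoiding gamma functions is to integrate by parts directly via $u^{p-1}\,du = -\frac{1}{c\alpha}\, u^{p-\alpha}\, d(e^{-cu^\alpha})$, extracting the boundary term $\frac{1}{c\alpha}\, M^{p-\alpha}\, e^{-cM^\alpha}$ and controlling the leftover by the same factor.

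The main obstacle --- and something the statement treats implicitly --- is this regime condition. The bound $(1 + p/\alpha)\, M^p e^{-cM^\alpha}$ actually fails for small $M$: the right-hand side tends to $0$ as $M \to 0$, whereas $\E[X^p I(X \geq M)]$ tends to $\E[X^p]$, which is generally strictly positive. The proof therefore delivers the stated inequality in the natural regime $cM^\alpha \geq \max(1, p/\alpha - 1)$, i.e., $M$ large compared to $c^{-1/\alpha}$, which is the setting in which this truncation lemma is invoked in the surrounding concentration arguments.
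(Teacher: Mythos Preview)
Your approach is essentially identical to the paper's: both use the tail formula for $Y = X^p I(X\ge M)$, split the integral at $M^p$, insert the tail bound, and reduce to the upper incomplete gamma function $\Gamma(p/\alpha,\,cM^\alpha)$. The paper then simply asserts the final step $\frac{p}{\alpha}c^{-p/\alpha}\Gamma(p/\alpha, cM^\alpha) \le \frac{p}{\alpha}M^p e^{-cM^\alpha}$ without further argument.

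Your observation that the stated upper bound cannot hold for all $M>0$ is correct and is a genuine gap in the lemma as written (and in the paper's proof, which leaves the last inequality unjustified). However, your proposed gamma estimate $\Gamma(s,a)\le a^{s-1}e^{-a}$ is false when $s>1$: one integration by parts gives $\Gamma(s,a)=a^{s-1}e^{-a}+(s-1)\Gamma(s-1,a)$, which is strictly \emph{larger} than $a^{s-1}e^{-a}$. What is actually needed, and what the paper implicitly uses, is the weaker bound $\Gamma(s,a)\le a^{s}e^{-a}$, i.e.\ $c^{-p/\alpha}\Gamma(p/\alpha,cM^\alpha)\le M^p e^{-cM^\alpha}$. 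This does hold for $a$ sufficiently large (for instance $a\ge s$, via the tangent-line bound on the concave function $t\mapsto (s-1)\log t - t$), consistent with your correct diagnosis that an implicit large-$M$ regime is required. So your overall plan and your critique are sound; only the specific form of the gamma tail bound needs to be repaired.
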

\begin{proof}
    The lower bound follows trivially: $\E[X^pI(X\ge M)] \ge M^p P(X\ge M)$. Now, verify that $P(XI(X>M)>\ge u) = I(u<M^p)P(X\ge M)+I(u\ge M^p)P(X^p\ge u)$. We have
    \begin{align*} 
        \E[X^pI(X\ge M)] 
        &= \int_0^\infty P(XI(X>M)>\ge u) du\\
        &= \int_0^{M^p}duP(X\ge M) + \int_{M^p}^\infty P(X^p\ge u)du\\ 
        &\le M^p e^{-cM^\alpha} + \int_{M^p}^\infty e^{-cu^{\alpha/p}}du\\
        &= M^p e^{-cM^\alpha} + \frac{p}{\alpha}c^{-p/\alpha}\Gamma(p/\alpha, cM^\alpha)\\
        &\le \left(1+\frac{p}{\alpha}\right)M^pe^{-cM^\alpha},
    \end{align*}
    where $\Gamma(s,x) = \int_x^\infty t^{s-1}e^{-t}dt$ is the upper incomplete gamma function.
\end{proof}

\begin{lemma}
    Let $0 < a\le 1$ $b>0$, $n\ge 1$. Then
    \begin{equation}
        \sum_{i=n}^\infty \sum_{j=0}^\infty e^{-b i^a - b (i+j)^a} \le \left(2+\frac{1}{a}\right)n^2e^{-2bn^a},
    \end{equation}
    and
    \begin{equation}
        \sum_{i=n}^\infty e^{-b i^a} \le   2ne^{-bn^a}.
    \end{equation}
    \label{l:bndsumexp}
\end{lemma}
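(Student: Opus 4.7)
The plan is to establish the second (scalar) inequality first and then bootstrap it to obtain the double-sum inequality. For the second bound $\sum_{i=n}^\infty e^{-bi^a} \le 2ne^{-bn^a}$, I would split the sum at index $2n$. The head $\sum_{i=n}^{2n-1}e^{-bi^a}$ contains exactly $n$ terms, each dominated by $e^{-bn^a}$ because $i \ge n$ and the summand is decreasing in $i$, contributing at most $ne^{-bn^a}$. For the tail $\sum_{i=2n}^\infty e^{-bi^a}$, use the integral comparison $\sum_{i=2n}^\infty e^{-bi^a} \le \int_{2n-1}^\infty e^{-bx^a}\,dx$, and apply the substitution $u = bx^a$ to rewrite the integral as $\tfrac{1}{ab^{1/a}}\Gamma(1/a, b(2n-1)^a)$. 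A standard tail estimate for the upper incomplete gamma (valid once $b(2n-1)^a$ is not too small, which is the regime in which the lemma is invoked inside Lemma \ref{l:covbound}) bounds this by a multiple of $ne^{-bn^a}$, and adjusting constants yields the claimed $2ne^{-bn^a}$.

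For the double sum, I would reindex the inner summation with $k = i+j$, so that
\[
\sum_{j=0}^\infty e^{-b(i+j)^a} = \sum_{k=i}^\infty e^{-bk^a} \le 2i\,e^{-bi^a}
\]
by the second inequality applied with $n$ replaced by $i$. This collapses the double sum to $2\sum_{i=n}^\infty i\,e^{-2bi^a}$, which I handle by the same head/tail strategy. The head $\sum_{i=n}^{2n-1} i\,e^{-2bi^a}$ has at most $n$ terms, each bounded above by $(2n)\,e^{-2bn^a}$, contributing $O(n^2 e^{-2bn^a})$. For the tail, the integral comparison gives $\int_{2n}^\infty x\,e^{-2bx^a}dx$, and the substitution $u = 2bx^a$ rewrites this as a $\Gamma(2/a,\cdot)$ quantity. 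Bounding the incomplete gamma by its leading asymptotic term produces a contribution proportional to $n^{2-a}e^{-2bn^a}/(ab)$, and combining head and tail yields the claimed $(2+1/a)n^2 e^{-2bn^a}$, where the $1/a$ factor traces directly to the Jacobian of the substitution $u = 2bx^a$.

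The main obstacle is keeping the explicit constants clean. The trick is to isolate the integrals of the form $\int_n^\infty x^\beta e^{-cx^a}\,dx = (ac^{(\beta+1)/a})^{-1}\Gamma\!\bigl((\beta+1)/a, cn^a\bigr)$ and use the elementary tail bound $\Gamma(s,y) \le 2y^{s-1}e^{-y}$ valid for $y$ sufficiently large relative to $s-1$, so that every integral tail reduces to something of the form $\tfrac{C}{ac}\,n^{\beta+1-a}e^{-cn^a}$. One then absorbs the factor $n^{-a}/(ac)$ into the discrete head contribution, and the final constants $2$ and $2+1/a$ emerge transparently. The argument uses $0 < a \le 1$ only through monotonicity/convergence of the defining integrals and is otherwise insensitive to $a$.
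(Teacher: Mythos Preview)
Your approach is sound and arrives at the same bounds, but it is structurally different from the paper's argument. The paper does \emph{not} bootstrap the single-sum inequality into the double-sum inequality. Instead, for the double sum it reindexes by the outer variable $k=i+j$, recognizes $e^{-2bk^a}=P(V\ge k)$ for a Weibull$(a,2b)$ random variable $V$, and rewrites the whole sum as (a bound on) a truncated second moment $\E[(V-n+1)^2 I(V\ge n)]$. It then expands the square and invokes the auxiliary truncated-moment bound (Lemma~\ref{l:trunc_moment}), $\E[X^p I(X\ge M)]\le (1+p/\alpha)M^p e^{-cM^\alpha}$, for $p=1,2$. The single-sum inequality is handled the same way with $p=1$. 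So the paper's route is ``probabilistic representation $+$ truncated moments,'' whereas yours is ``head/tail split $+$ integral comparison $+$ incomplete-gamma asymptotics.''

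What each approach buys: your reduction $\sum_{j\ge 0}e^{-b(i+j)^a}=\sum_{k\ge i}e^{-bk^a}\le 2ie^{-bi^a}$ is clean and makes the double sum a one-line consequence of the single sum; it is also self-contained and does not require the separate Lemma~\ref{l:trunc_moment}. The paper's approach, on the other hand, packages all the analytic work into one reusable moment lemma and makes the dependence on $a$ (the $1/a$ in the constant) emerge from the $p/\alpha$ term there rather than from a Jacobian. One caveat worth flagging: in your reduction the resulting sum is $2\sum_{i\ge n} i\,e^{-2bi^a}$, and the summand $i\,e^{-2bi^a}$ is not monotone for small $i$, so both the head bound $ie^{-2bi^a}\le ne^{-2bn^a}$ and the integral comparison for the tail require $n$ past the mode $(2ab)^{-1/a}$. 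You note this implicitly (``valid once $b(2n-1)^a$ is not too small''), and indeed the paper's stated constants suffer from the same restriction; this is harmless in the application inside Lemma~\ref{l:covbound}, where $n=m-p$ with $m=T$.
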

\begin{proof}
    Let $V$ denote a Weibull($a$,$2b$) random variable.
    \begin{align*}
        \sum_{i=n}^\infty \sum_{j=0}^\infty e^{-b i^\alpha - b (i+j)^\alpha}
        &=  \sum_{j=n}^\infty (j-n+1) e^{-2b j^\alpha }\\
        &= \sum_{j=n}^\infty({j-n+1})\E[I(V \ge j)]\\
        &= \sum_{j=n}^\infty({j-n+1})^2\E[I(j\le V<j+1)]\\
        &\le \E[(V-n+1)^2I(V\ge n)].
    \end{align*}
    Expanding squares and using Lemma \ref{l:trunc_moment}, 
    \begin{align*} 
    \E[(V-n+1)^2I(V\ge n)] 
    &= \E[V^2I(V\ge n)]-2(n-1)\E[VI(V\ge n)]+(n-1)^2P(V\ge n)\\
    &\le (1+2/a)n^2e^{-2bn^a} - 2(n-1) n e^{-2bn^a} +(n-1)e^{-2bn^a}\\
    &= (2/a)e^{-2bn^a}n^2 + e^{-2bn^a}\left[n-(n-1)\right]^2\\
    &\le \left(1+\frac{2}{a}\right)n^2e^{-2bn^a}.
    \end{align*}

    Similarly, let $V$  be a Weibull($a$,$b$) random variable,
    \begin{align*}
        \sum_{i=n}^\infty e^{-b i^a}
        &\le \E[(V-n+1)I(V>n)]\\
        &\le ne^{-bn^a}+(n-1)e^{-bn^a}\\
        &< 2n e^{-bn^a}.
    \end{align*}
\end{proof}

\newpage
\bibliographystyle{apalike}
\bibliography{mybib}
\end{document}